\documentclass[12pt,leqno,fleqn]{amsart}  
\usepackage{amsmath,amstext,amsthm,amssymb,amsxtra}
\usepackage[top=1.5in, bottom=1.5in, left=1.25in, right=1.25in]	{geometry}
\usepackage[normalem]{ulem}
\usepackage{txfonts} 
\usepackage[T1]{fontenc}
\usepackage{lmodern}
\usepackage{tikz}\usetikzlibrary{arrows}

\usepackage{euler}   




\makeatletter
\DeclareRobustCommand\widecheck[1]{{\mathpalette\@widecheck{#1}}}
\def\@widecheck#1#2{%
    \setbox\z@\hbox{\m@th$#1#2$}%
    \setbox\tw@\hbox{\m@th$#1%
       \widehat{%
          \vrule\@width\z@\@height\ht\z@
          \vrule\@height\z@\@width\wd\z@}$}%
    \dp\tw@-\ht\z@
    \@tempdima\ht\z@ \advance\@tempdima2\ht\tw@ \divide\@tempdima\thr@@
    \setbox\tw@\hbox{%
       \raise\@tempdima\hbox{\scalebox{1}[-1]{\lower\@tempdima\box
\tw@}}}%
    {\ooalign{\box\tw@ \cr \box\z@}}}
\makeatother

\usepackage{mathtools}
\mathtoolsset{showonlyrefs,showmanualtags}

\usepackage{hyperref} 
\hypersetup{
    colorlinks=true,       
    linkcolor=blue,          
    citecolor=magenta,        
    filecolor=magenta,      
    urlcolor=cyan           
}

\usepackage[msc-links]{amsrefs}


\theoremstyle{plain} 
\newtheorem{lemma}[equation]{Lemma} 
\newtheorem{proposition}[equation]{Proposition} 
\newtheorem{theorem}[equation]{Theorem}

\theoremstyle{definition}

\theoremstyle{remark}

\numberwithin{equation}{section}

%

%

%
%
%

%
%
%
%
%
%

\title[Sparse  Bounds for Discrete Maximal Operators] { Sparse Bounds for the \\ Discrete Spherical Maximal Functions}
\author[R. Kesler]{Robert Kesler}

\address{ School of Mathematics, Georgia Institute of Technology, Atlanta GA 30332, USA}
\email{robertmkesler@gmail.com}

\author[M. T. Lacey] {Michael T. Lacey}   

\address{ School of Mathematics, Georgia Institute of Technology, Atlanta GA 30332, USA}
\email {lacey@math.gatech.edu}
\thanks{Research supported in part by grant  from the US National Science Foundation, DMS-1600693 and the 
Australian Research Council ARC DP160100153.}

 \author[D. Mena]{Dar\'io Mena} 
\address{Escuela de Matem\'atica, Universidad de Costa Rica, San Jos\'e, Costa Rica}
\email {dario.menaarias@ucr.ac.cr}
\thanks{Research supported by project 821-B8-287, CIMPA, Escuela de Matem\'atica, UCR.}



\begin{document}
\begin{abstract}
We prove sparse bounds for   the  spherical   maximal operator of Magyar, Stein and Wainger. 
The bounds are conjecturally sharp, and contain an endpoint estimate.    
The new method of  proof is inspired by ones by Bourgain and  Ionescu,   is very efficient, and has not been used in the proof of sparse bounds before.  
The Hardy-Littlewood Circle method is used to decompose the multiplier into major and minor arc components.  The efficiency arises as one 
only needs a single estimate on each  element of the decomposition.
\end{abstract}

	\maketitle  
\tableofcontents 
\section{Introduction} 

 Let  $ \mathcal A _{\lambda } f = d \sigma _{\lambda } \ast f $ where 
 $ d \sigma _{\lambda }$ is a uniform unit mass spherical measure on a sphere of radius $ \lambda $ in 
$ \mathbb R ^{d}$, for $ d \geq 3$.
Set the Stein spherical maximal operator to be 
\begin{equation}\label{e:AA}
\mathcal A f (x)  = \sup _{\lambda >0}  \mathcal A _{\lambda } f , 
\end{equation}
where $ f$ is a non-negative compactly supported and bounded function.    
We are interested in sparse bound for the maximal function. In the continuous case, this estimate holds, 
and is sharp, up to the boundary. 

\begin{theorem}\label{t:R} \cite{170208594L} Let $ d\geq 3$ and set $ \mathbf R_d$ to be  the polygon with vertices 
$ R_0 =   (\frac{d-1}d, \frac{1}d)$,  $ R_1 = (\frac{d-1}d, \frac{d-1}d)$, $ R_2 = (\frac{d ^2 -d} {d ^2 +1}, \frac{d ^2 -d+2} {d ^2 +1})$,  and 
$R_3 =(0,1)$.  (See Figure~\ref{f:A}.)
Then, for all $ (\frac{1}p , \frac{1}q)$ in the interior $ \mathbf R_d$, we have the sparse bound 
$
\lVert \mathcal A  \rVert _{p,q} < \infty  
$. 

\end{theorem}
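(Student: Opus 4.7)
The strategy is to combine $L^p \to L^q$ improving estimates for the single-scale averages $\mathcal A_\lambda$ with a stopping-time recursion that produces the sparse collection. Unlike the discrete problem that forms the body of the paper, no Hardy--Littlewood Circle method is needed here: the smoothness of $d\sigma$ across scales is already encoded in the classical Fourier decay $|\widehat{d\sigma}(\xi)| \lesssim (1+|\xi|)^{-(d-1)/2}$.

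First I would record the sharp single-scale improving trapezoid. Fourier decay combined with interpolation against the trivial $L^1 \to L^1$ and $L^\infty \to L^\infty$ bounds yields
\[
\|\mathcal A_\lambda f\|_{L^q(\mathbb R^d)} \;\lesssim\; \lambda^{d/q - d/p}\, \|f\|_{L^p(\mathbb R^d)}
\]
on an explicit closed trapezoid of $(1/p,1/q)$. The vertices of $\mathbf R_d$ are precisely the points this trapezoid forces, together with the dual and trivial endpoints: $R_3 = (0,1)$ is the trivial $L^\infty \to L^1$ corner, $R_0$ encodes Stein's $L^p$ maximal endpoint, $R_1$ sits on the diagonal, and $R_2$ is the Littman--Strichartz corner of $L^p$-improving for a single sphere.

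To pass from single-scale estimates to a sparse bound on $\mathcal A$, I would linearize $\mathcal A f(x) = \mathcal A_{\lambda(x)} f(x)$ by a measurable selector $\lambda(\cdot)$, dyadically slice the scales $\lambda(x) \in [2^k, 2^{k+1})$, and localize. For a dyadic cube $Q_0$, the estimate
\[
\big\langle \mathbf 1_{Q_0}\, \mathcal A(\mathbf 1_{Q_0} f),\, g\big\rangle \;\lesssim\; |Q_0|\,\langle f\rangle_{Q_0,p}\,\langle g\rangle_{Q_0,q'}
\]
for $(1/p,1/q)$ in the open trapezoid follows by summing the rescaled single-scale inequality in $k$ with $2^k \lesssim \ell(Q_0)$, while scales $2^k \gtrsim \ell(Q_0)$ are absorbed by Stein's $L^p$-boundedness of $\mathcal A$. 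The sparse collection is then produced by the standard recursion: select maximal subcubes $Q' \subsetneq Q_0$ on which either $\langle f\rangle_{Q',p}$ or $\langle g\rangle_{Q',q'}$ is large, apply the localized bound on $Q_0 \setminus \bigcup Q'$, and iterate into each $Q'$.

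The main obstacle, and the reason one obtains only the interior of the polygon, is the neighborhood of $R_2$: the single-scale improving inequality is sharp precisely at this corner, so the $\varepsilon$-loss needed to sum dyadic scales and iterate the stopping time must be paid by moving into the interior of $\mathbf R_d$. Quantifying this loss sharply, so that \emph{every} interior point of $\mathbf R_d$ is attained rather than only a smaller open subregion, is the technical heart of the argument and requires the square function and local smoothing estimates of Mockenhaupt--Seeger--Sogge type used in the cited paper.
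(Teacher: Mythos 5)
This theorem is not reproved in the paper; it is quoted from \cite{170208594L}, and the only part used here is the recursive inequality recalled as Lemma~\ref{l:R}. Still, the paper tells you precisely what the key ingredient is: ``The theorem above refines the well-known $L^p$-improving properties for the \emph{local maximal function} $\sup_{1\le\lambda\le 2}\mathcal A_\lambda\ast f$, proved by Schlag and Schlag--Sogge.'' Your proposal works instead from the single-scale improving bound for a fixed convolution $\mathcal A_\lambda$, and this is where a genuine gap appears.

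After you linearize $\mathcal A f(x)=\mathcal A_{\lambda(x)}f(x)$ and dyadically slice $\lambda(x)\in[2^k,2^{k+1})$, each slice is still a variable-radius, non-convolution operator. It must be controlled by the improving estimate for the \emph{local maximal function} $\sup_{1\le\lambda\le 2}\mathcal A_\lambda$, not by the fixed-radius bound $\|\mathcal A_{2^k}f\|_q\lesssim 2^{k(d/q-d/p)}\|f\|_p$. The sentence claiming the localized bound ``follows by summing the rescaled single-scale inequality in $k$'' skips exactly this step: the single-scale estimate controls one convolution, not the supremum over a dyadic annulus of radii, and the two regions of $(1/p,1/q)$ are genuinely different. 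Indeed, the local maximal improving region is strictly smaller than the single-sphere improving trapezoid, and it is the former, not the latter, that determines the shape of $\mathbf R_d$.

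Concretely, your identification of $R_2$ with the Littman--Strichartz corner is incorrect. The Littman endpoint $d\sigma\ast:L^{(d+1)/d}\to L^{d+1}$ sits on the diagonal at $(\tfrac{d}{d+1},\tfrac{d}{d+1})$ in sparse-form coordinates, whereas $R_2=(\tfrac{d^2-d}{d^2+1},\tfrac{d^2-d+2}{d^2+1})$ is off-diagonal and is the corner of the Schlag--Sogge local maximal improving region, obtained from local smoothing. Your closing paragraph does invoke Mockenhaupt--Seeger--Sogge local smoothing, but it should enter at the very beginning---as the source of the local maximal improving estimate---rather than as a sharpening applied afterward to reach points near $R_2$. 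Once that substitution is made, the rest of your outline (linearization, dyadic slicing, localization on a fixed cube, and stopping-time recursion) does match the structure of the argument in \cite{170208594L}, which this paper packages as Lemma~\ref{l:R}.
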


\begin{figure}
\begin{tikzpicture}[xscale=0.8,yscale=0.8] 
\draw [->]  (-.5, 0) -- (5.5,0) node[below] {$ 1/p$}; 
\draw [->]  (0,-.5) -- (0,5.5) node[left] {$ 1/q$}; 
\draw (4.8,-.25 )  node [below] {$ 1$}  -- (4.8,4.8) -- (-.25,4.8) node [left] {$ 1$} ; 
\draw (3.8,1) --  (3.8, 3.8) node [right] {${}_{R_1}$} 
--  (3.6, 4)  node[above] {${}_{R_2}$} -- (0,4.8)  -- (3.8,1) ;  
\draw (3.8 , .25) -- (3.8,-.25) node [below] {$ \tfrac {d-1}d$}; 
\end{tikzpicture}
\caption{Sparse bounds hold for points $ (1/p, 1/q)$ in the interior of the four sided region $ \mathbf R_d$. 
The point $ R_1$ is $(\frac{d-1}d, \frac{d-1}d)$ and $ R_2 $ is $(\frac{d ^2 -d} {d ^2 +1}, \frac{d ^2 -d+2} {d ^2 +1})$.}
\label{f:A}
\end{figure}
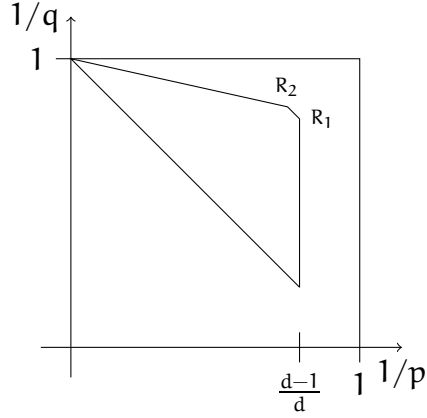

We set notation for the sparse bounds.  
Call a collection of cubes $ \mathcal S$  in $ \mathbb R ^{n}$ \emph{sparse} if there 
are sets $ \{ E_S  \,:\, S\in \mathcal S\}$  
which are pairwise disjoint,   $E_S\subset  S$ and satisfy $ \lvert  E_S\rvert > \tfrac 14 \lvert  S\rvert  $. 
For any cube $ Q$ and $ 1\leq r < \infty $, set $ \langle f \rangle_ {Q,r} ^{r} = \lvert  Q\rvert ^{-1} \int _{Q} \lvert  f\rvert ^{r}\; dx  $.  Then the $ (r,s)$-sparse form $ \Lambda _{\mathcal S, r,s} = \Lambda _{r,s} $, indexed by the sparse collection $ \mathcal S$ is 
\begin{equation} \label{e:sparsR_def}
\Lambda _{S, r, s} (f,g) = \sum_{S\in \mathcal S} \lvert  S\rvert \langle f  \rangle _{S,r} \langle g \rangle _{S,s}.  
\end{equation}
For a sublinear operator $ T$, we set $ \lVert T \rVert _{r,s}  $ to be the best constant $ C$ in 
the inequality 
\begin{equation*}
\langle T f, g \rangle < C \sup _{\mathcal S} \Lambda _{S, r, s} (f,g) . 
\end{equation*}
We use the same notation for sublinear operators $ T$ acting on functions defined on $ \mathbb Z ^{d}$.

The theorem above refines the well-known $ L ^{p}$-improving properties for the local maximal function 
$ \sup _{1\leq \lambda  \leq 2}  \mathcal A _{\lambda } \ast  f $, 
proved by Schlag \cite{MR1388870} and Schlag and Sogge \cite{MR1432805}.  Also see \cite{MR1949873}.  
The Theorem above  has as immediate corollaries 
(a) vector valued inequalities, and (b) weighted consequences.  Both sets of consequences are the strongest known.  The method of proof  uses the $ L ^{p}$-improving 
inequalities for the spherical maximal function.  That is, the proof is, in some sense, standard, although only recently discovered, and yields the best known information about the mapping properties of the spherical maximal function.

We turn to the setting of discrete spherical averages.  Provided $ \lambda ^2 $ is an integer, and dimension $ d \geq 5$, we can define 
\begin{equation}\label{e:A}
A _{\lambda } f (x) = \lambda  ^{2-d} \sum_{n \in \mathbb Z ^{d} \;:\; \lvert  n\rvert = \lambda  } f (x-n) 
\end{equation}
for functions $ f\in \ell ^2 (\mathbb Z ^{d})$. 
We restrict attention to the case of $ d\geq 5$ as in that case for all $ \lambda ^2 \in \mathbb N $, 
the cardinality of $ \{ n\in \mathbb Z ^{d} \;:\; \lvert  n\rvert = \lambda \}  \simeq \lambda ^{d-2} $.  
Let $ A f = \sup _{\lambda } A _{\lambda } f $, where we will 
always understand that $ \lambda ^2 \in \mathbb N $.    This is the maximal function of 
Magyar \cite{MR1617657} and Magyar, Stein and Wainger \cite{MSW}. 
The following Theorem is the best known extension of the sparse bounds for the continuous spherical maximal function to the discrete 
setting. 

\begin{theorem}\label{t:Z}
Let $ \mathbf Z_d$ be  the polygon with vertices  
\begin{equation}  \label{e:ZZ}
Z_j = \tfrac{d-4} {d-2} R_j + \tfrac 2{d-2} (\tfrac12, \tfrac12), \qquad j=0,1,2,
\end{equation}
and $ Z_3 = (0,1)$.  (See Figure~\ref{f:B}.)
There holds: 
\begin{enumerate}
\item    For all $ (\frac{1}p , \frac{1}q)$ in the interior $ \mathbf Z_d$, we have the sparse bound 
$
\lVert  A f  \rVert _{p,q} < \infty 
$. 

\item  With $ f = \mathbf 1_{F}$ and $ g = \mathbf 1_{G}$, there holds 
\begin{equation} \label{e:Z2}
\langle A \mathbf 1_{F} , \mathbf 1_{G}  \rangle \lesssim \sup _{\mathcal S} 
\Lambda _{\mathcal S, \frac d{d-2}, \frac d{d-2}} (\mathbf 1_{F}, \mathbf 1_{G}).  
\end{equation}
\end{enumerate}

\end{theorem}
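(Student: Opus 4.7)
The plan, as suggested by the abstract, is to decompose the multiplier of $A_\lambda$ via the Hardy–Littlewood circle method into major arc components indexed by a scale parameter $s\geq 0$ (collecting denominators $q\sim 2^s$ in the Farey dissection) plus a minor arc remainder, and to prove \emph{a single} sparse bound for each piece $A^s=\sup_\lambda A^s_\lambda$, then sum in $s$. This is the Bourgain–Ionescu style of analysis: rather than telescoping scales and playing the usual short-versus-long variation game, one invests everything in one good bound per scale and relies on geometric summability in $s$. For each fixed $s$, the proof will combine two ingredients: a transferred continuous sparse bound (which is large in $s$) and a trivial $\ell^2\to\ell^2$ bound (which is small in $s$), and then interpolate.

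\textbf{Major arcs.} For each $s$, the Magyar–Stein–Wainger approximation writes the multiplier of $A^s_\lambda$ as a Gauss-sum weighted average, over denominators $q\sim 2^s$ and numerators $a$, of translates of the continuous spherical multiplier $\widehat{d\sigma_\lambda}$ localized to small arcs around $a/q$. Using sampling/transference and Theorem~\ref{t:R}, this structure yields a sparse bound for $A^s$ at every interior point of $\mathbf R_d$, with constant of the form $2^{s\kappa(p,q)}$ where $\kappa(p,q)$ reflects the loss from summing $\sim 2^{sd}$ rationals against the standard Gauss sum decay $2^{-s(d/2-1)}$. In parallel, the classical $\ell^2$ theory of the discrete spherical maximal function in $d\geq 5$ supplies the $\ell^2$ decay $\|A^s\|_{2\to 2}\lesssim 2^{-s\epsilon}$ for some $\epsilon=\epsilon(d)>0$, which is a $(2,2)$ sparse bound at the center $(\tfrac12,\tfrac12)$.

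\textbf{Interpolation at each scale and summation.} With $\theta=\tfrac{2}{d-2}$, a sparse-form interpolation between the $R_j$ bound (weight $1-\theta$) and the $(\tfrac12,\tfrac12)$ bound (weight $\theta$) produces at each scale $s$ a sparse bound at $Z_j=\tfrac{d-4}{d-2}R_j+\tfrac{2}{d-2}(\tfrac12,\tfrac12)$ with constant $\lesssim 2^{s((1-\theta)\kappa(p,q)-\theta\epsilon)}$. For $(1/p,1/q)$ strictly interior to $\mathbf Z_d$, the exponent is negative and the geometric series in $s$ converges, giving statement (1). The minor arc remainder is absorbed in the same way: it satisfies an $\ell^2$ bound with an even faster decay and can be interpolated against the trivial $(1,\infty)$ sparse bound.

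\textbf{The endpoint and main obstacle.} The delicate part is (\ref{e:Z2}), the endpoint at $Z_1=(\tfrac{d-2}{d},\tfrac{d-2}{d})$. Here, after the above interpolation, the exponent in $s$ is zero rather than negative, so the geometric summation over scales fails if carried out naively. This is where the restriction to indicators $\mathbf 1_F,\mathbf 1_G$ is used: one instead invokes a restricted-weak-type version of Theorem~\ref{t:R} at the boundary vertex $R_1$ (where the continuous sparse bound is only conjecturally interior but is known for characteristic functions up to log losses), and combines it with the Gauss sum decay sharpened at the critical scale. The hardest step to make rigorous is precisely this endpoint: verifying that the restricted-type interpolation closes, i.e., that the number-theoretic loss at scale $s$ exactly matches the $L^2$ gain and is then recovered by the sparse structure of $F$ and $G$. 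Everything else is an organizational consequence of the circle method decomposition and the sparse interpolation machinery.
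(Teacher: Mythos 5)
Your high-level ingredients are the right ones — circle-method decomposition, a transferred continuous sparse bound from Theorem~\ref{t:R}, an $\ell^2$ estimate from Magyar–Stein–Wainger, and the observation that $Z_j$ is the $\tfrac{2}{d-2}$-combination of $R_j$ with $(\tfrac12,\tfrac12)$. But the way you combine them has a genuine gap.

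The step ``a sparse-form interpolation between the $R_j$ bound (weight $1-\theta$) and the $(\tfrac12,\tfrac12)$ bound (weight $\theta$) produces at each scale $s$ a sparse bound at $Z_j$'' is not a valid move. Sparse forms do not obey a Riesz–Thorin-type inequality: for a fixed sparse collection $\mathcal S$, H\"older gives $\Lambda_{\mathcal S,p,q} \le \Lambda_{\mathcal S,p_0,q_0}^{\,1-\theta}\,\Lambda_{\mathcal S,2,2}^{\,\theta}$, which goes the \emph{wrong} way — knowing $|\langle T_s f,g\rangle|\lesssim C_s\Lambda_{p_0,q_0}$ and $|\langle T_s f,g\rangle|\lesssim D_s\Lambda_{2,2}$ for the \emph{same} piece $T_s$ does \emph{not} yield a sparse bound at the interpolated exponent with constant $C_s^{1-\theta}D_s^\theta$. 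The paper circumvents this by never interpolating two bounds for the same piece. Instead it fixes a single free parameter $N$, uses $N$ to decide \emph{which} part of the circle-method decomposition is estimated by the continuous sparse bound ($M_1$, yielding $N^{1+\epsilon}\langle f\rangle^{1/\bar p}\langle g\rangle^{1/\bar q}$) and which by $\ell^2$ ($M_2$, yielding $N^{-(d-4)/2+d\epsilon}\langle f\rangle^{1/2}\langle g\rangle^{1/2}$), and only then optimizes over $N$ — crucially after restricting to indicator functions so that the two right-hand sides are comparable for one explicit choice of $N$. This ``one estimate per piece, optimize $N$ at the end'' organization, emphasized in the abstract, is precisely what replaces the interpolation you are invoking.

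Two further points are off the mark. First, the small-denominator sparse part is not an easy transference of Theorem~\ref{t:R}: after factoring out the Gauss sums, one is left with a kernel of the form $P_{Q,\tau}(n)\,\mathsf C_Q(|n|^2-\lambda^2)$ involving sums of Ramanujan sums, and controlling those requires the tertiary decomposition and Bourgain's $\ell^k$ averages of Ramanujan sums (Lemma~\ref{l:Ram}); your sketch does not account for this and would get stuck there. Second, your description of the endpoint at $Z_1$ (``a restricted-weak-type version of Theorem~\ref{t:R} at the boundary vertex $R_1$'') does not match the actual argument and is not needed: the paper proves \eqref{e:Z2} by comparing to averages over \emph{balls} rather than spherical averages, so no boundary-point continuous spherical estimate — and in fact no Ramanujan sum input at all — is used; the optimization is instead between a trivial $N^2\langle f\rangle_1\langle g\rangle_1$ bound and the $N^{-(d-4)/2}\langle f\rangle_2\langle g\rangle_2$ bound, closing exactly at $N\simeq(\langle f\rangle_1\langle g\rangle_1)^{-1/d}$.
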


By direct computation, $ Z_0= (\tfrac{d-2}d, \frac2{d})$, $ Z_1 = (\tfrac{d-2}d,\tfrac{d-2}d)$ and 
\begin{equation}\label{e:Quad}
Z_2 = \bigl(\tfrac{ d ^{3} - 4 d ^2 +4d +1} {d ^{3} -2 d ^2 + d-2} ,  \tfrac{ d ^{3} - 4 d ^2 +6d-7} {d ^{3} -2 d ^2 + d-2}\bigr) . 
\end{equation}
The sparse bound near the point $ (\tfrac{d-2}d, \frac2{d})$ implies the maximal inequality of 
Magyar, Stein and Wainger \cite{MSW}, namely that $ A \;:\; \ell ^{p} (\mathbb{Z} ^{d}) \to \ell ^{p} (\mathbb{Z} ^{d})$, 
for $ p> \frac d{d-2}$.  The sparse bound \eqref{e:Z2} requires that both functions be indicator sets, 
and so is of restricted weak type.  
It implies the restricted weak type inequality of Ionescu \cite{I}.  
These inequalities imply a wide range of weighted and vector valued inequalities, all of which are new. 
See the applications of the sparse bound in the continuous case in \cite{170208594L}.

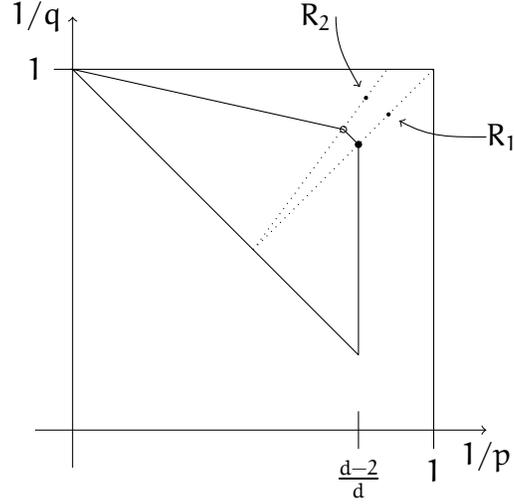
\begin{figure}
\begin{tikzpicture}
\draw [->]  (-.5, 0) -- (5.5,0) node[below] {$ 1/p$}; 
\draw [->]  (0,-.5) -- (0,5.5) node[left] {$ 1/q$}; 
\filldraw (3.8,3.8) circle (.1em) ;  
\draw (3.6, 4)  circle (.1em) ;  

\draw[dotted] (2.4,2.4) -- (4.8,4.8); 
\draw[dotted] (2.4,2.4) -- (4.18,4.8); 
 
\filldraw (4.2,4.2) circle (.05em) node (R1) {};  
\filldraw (3.9, 4.42)  circle (.05em) node (R2) {};  

\draw (3.8, 1.0) -- (3.8,3.8) -- (3.6, 4) -- (0,4.8) -- (3.8, 1);  
\draw (3.8 , .25) -- (3.8,-.25) node [below] {$ \tfrac {d-2}d$}; 
\draw (4.8,-.25 )  node [below] {$ 1$}  -- (4.8,4.8) -- (-.25,4.8) node [left] {$ 1$} ; 

\draw[->]    (5.5, 3.9) node {\ \ \ $ R_1$}   to [out = 180 , in = -30] (R1) ; 
\draw[->]    (3.5, 5.5) node {$ R_2$\ \ \  \ \ }   to [out = -90 , in = 120] (R2) ; 
\end{tikzpicture}
\caption{Sparse bounds for the discrete spherical maximal function 
hold for points $ (1/p, 1/q)$ in the interior of the four sided figure above.
The dotted lines pass through the points $ (1/2,1/2)$ and the points $ R_1$ and $ R_2$ 
of Figure~\ref{f:A}.  Circles along these lines are the points $ Z_1$ and $ Z_2$.  
The  restricted weak-type sparse bound \eqref{e:Z2} holds at the filled in circle, $Z_1 = (\frac d {d-2},\frac d {d-2})$.  
} 
\label{f:B}
\end{figure}

\medskip  

The discrete spherical maximal function $ \ell ^{p}$ bounds were established by Magyar, Stein and Wainger 
\cite{MSW}, with an endpoint restricted weak-type estimate proved by Ionescu \cite{I}.  
The discrete $ \ell ^{p}$-improving inequalities have only recently been investigated.  
The case of a fixed radius was addressed, independently, in   \cites{180409260H,180409845}. 
Spherical maximal functions, restricting to lacunary   and super lacunary cases require different techniques 
\cites{160904313,180803822,181012344}.  
Robert Kesler established sparse bounds for the discrete case in \cites{180509925,180906468}.
This paper extends and simplifies those arguments.  

It is very tempting to conjecture that our sparse bounds form the sharp range, up to the endpoints. 
One would expect that certain kinds of natural examples would demonstrate this. 
But examples are much harder to come by in the discrete setting.  We return to this in \S~\ref{s:counter}.

The argument in this paper is elegant, especially if one restricts attention to the endpoint estimate 
\eqref{e:Z2}. And much simpler than the arguments in \cites{180509925,180906468}. 
It proceeds by decomposing the maximal function into a series of terms, guided by 
the Hardy-Littlewood circle method decomposition developed by Magyar, Stein and Wainger \cite{MSW}. 
The decomposition has many parts, as indicated in Figure~\ref{f:Fulltree} and Figure~\ref{f:teriary}.  
But, for each part of the decomposition, we need only one estimate, either an $ \ell ^2 $ estimate, 
or an endpoint estimate.  Roughly speaking, one uses either a `high frequency' $ \ell ^2 $ estimate, or a `low frequency' 
inequality, in which one compares to smoother averages.  Interestingly, the notion of `smoother averages' varies.     
The  argument of Ionescu combined with the sparse perspective yields a powerful inequality.  
Notations and conventions will be established in this section, and used throughout the paper.

\section{Proof of   the Sparse Bounds inside the polygon $ \mathbf Z_d$} 
A sparse bound is typically proved by recursion. 
So, the  main step is to prove the recursive statement. 
To do this, we fix a large dyadic cube $ E$, functions $ f = \mathbf 1_{F}$ and $ g= \mathbf 1_{G}$ 
supported on $ E$.  We say that $ \tau \;:\; E \to \{ 1 ,\dotsc, \ell E\}$ is 
an \emph{admissible stopping time} if for any subcube $ Q\subset E$ with 
$ \langle f \rangle _{Q} > C \langle f \rangle _{E}$, for some large constant $C$ to be chosen later, we have 
$ \min _{x\in Q} \tau (x) > \ell Q$.  

\begin{lemma}\label{l:recurse}
Let $ (\tfrac 1p, \tfrac 1q)$ be in the interior of $ \mathbf Z_d$.  
For any dyadic cube, functions 
 $ f = \mathbf 1_{F}$ and $ g= \mathbf 1_{G}$ 
supported on $ E$, and any admissible stopping time $ \tau $, there holds 
\begin{equation}\label{e:recurse}
\lvert  E \rvert ^{-1}  \langle A _{\tau } f , g  \rangle \lesssim \langle f \rangle _{E} ^{1/p} 
\langle g \rangle _{E} ^{1/q}. 
\end{equation}

\end{lemma}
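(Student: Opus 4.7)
The plan is to follow the strategy sketched in the introduction: use the Hardy--Littlewood circle method of Magyar, Stein and Wainger to split $A_\lambda$ into a discrete sum
$$A_\lambda f = \sum_{s \geq 0} T_\lambda^s f,$$
where for $s \geq 1$ the operator $T_\lambda^s$ gathers the major-arc contributions with denominators of size $2^s$, and $T_\lambda^0$ is the minor-arc remainder. Setting $M^s f(x) = \sup_{\lambda \leq 2^{\tau(x)}} |T_\lambda^s f(x)|$, I would control $\langle M^s f, g\rangle$ by one of two estimates, chosen according to $s$, and then sum in $s$.

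In the high-frequency regime (large $s$), the goal is an $\ell^2 \to \ell^2$ bound for $M^s$, which by the Gauss-sum factorization of the major-arc multipliers carries a decay factor of order $2^{-s\epsilon}$ for some $\epsilon = \epsilon(d) > 0$. This regime costs only a single $\ell^2$ inequality per value of $s$, and the geometric decay lets one sum in $s$ immediately, producing a contribution of size $2^{-s\epsilon}(\langle f\rangle_E \langle g\rangle_E)^{1/2}$.

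In the low-frequency regime (small or moderate $s$), I would replace $T_\lambda^s$ by a ``smoother'' comparison operator; which smoother depends on which vertex of $\mathbf Z_d$ is being targeted. Near $Z_0$ the comparison is to a Hardy--Littlewood dyadic ball average, and one inherits a classical $\ell^p$-improving sparse bound. Near $Z_2$ the right comparison is to the continuous spherical maximal function, to which Theorem~\ref{t:R} applies after a standard sampling/transference procedure. At the restricted-weak endpoint $Z_1$ the argument of Ionescu, reinterpreted in the sparse framework, provides the required $\ell^{d/(d-2)}$ estimate. In every case the admissible stopping time $\tau$ is exactly what guarantees that on the cubes at the scales being averaged one has $\langle f\rangle_Q \lesssim \langle f\rangle_E$; this is what converts the operator bound into one of the form $\langle f\rangle_E^{1/p_i}\langle g\rangle_E^{1/q_i}$ for a vertex $(p_i,q_i)$ of $\mathbf Z_d$, with an acceptable polynomial loss in $2^s$.

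Finally, a cutoff $s_0 = s_0(\langle f\rangle_E,\langle g\rangle_E)$ between the two regimes balances the geometrically decaying $\ell^2$ tail against the polynomially growing low-frequency piece; sweeping $s_0$ and the choice of comparison operator traces out the interior of $\mathbf Z_d$ by interpolation, which matches the convex-combination formula for $Z_j$ in \eqref{e:ZZ}. The principal obstacle I anticipate is the bookkeeping of the multi-part circle-method decomposition and the matching of each piece with exactly the right one of the available estimates; the endpoint $Z_1 = (\tfrac{d-2}{d},\tfrac{d-2}{d})$ is the most delicate vertex, since securing it requires the Ionescu-type restricted-weak-type bound recast in the sparse language rather than a direct $\ell^p \to \ell^p$ inequality.
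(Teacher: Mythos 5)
Your high-level picture — circle-method decomposition by denominator size, an $\ell^2$ estimate for the high-frequency/large-denominator pieces, a comparison to a smoother operator for the low-frequency piece, and then an optimization over the cutoff — is indeed the skeleton of the paper's proof. But the proposal, as written, has two genuine gaps and one structural mischaracterization that would prevent it from closing.

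First, the absorption argument is missing entirely. The low-frequency contribution contains a term that no Ramanujan-sum cancellation can help with: when the kernel $P_{\tau,Q}$ is restricted to the integer sphere $\mathbb S_\tau$ itself (the piece $P^0_{\tau,Q}$ in \eqref{e:P0}), the Ramanujan sum $\mathsf C_Q$ is evaluated at $0$ and is of size $Q^2$, with no cancellation. This piece is, up to a small constant, a fraction of $A_\tau f$ itself. The paper handles this by proving the ostensibly circular inequality $A_\tau f \leq M_1 + M_2 + \tfrac12 A_\tau f$ and absorbing; your proposal gives no way to deal with it. Saying you will "replace $T^s_\lambda$ by a smoother comparison operator" begs exactly this question, since a genuine part of $T^s_\lambda$ \emph{is} a spherical average and is not dominated by anything smoother.

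Second, the Ramanujan-sum estimates (Lemma~\ref{l:Ram}, in particular Bourgain's $\ell^k$-average bound \eqref{e:Ram<}) are an essential quantitative input, not bookkeeping. They are what make it possible to sum over $Q \leq q < 2Q$ and still land at the exponent $N^{1+\epsilon}$ in \eqref{e:M1Q}; the trivial bound $|\mathsf c_q| \leq q$ costs a factor $Q^2$ which would ruin the final exponent. The tertiary case analysis (Figure~\ref{f:teriary}) — comparing $\tau$ to powers of $Q$ — exists precisely to deploy the correct Ramanujan estimate in each regime, and none of this appears in the proposal.

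Finally, the geometry is not a vertex-by-vertex matching of comparison operators. The paper fixes a \emph{single} low-frequency comparison, namely the continuous spherical sparse bound \eqref{e:R} of Lemma~\ref{l:R}, applied at a point $(1/\bar p, 1/\bar q)$ lying on the line through $(1/2,1/2)$ and the target $(1/p,1/q)$, and interpolates this against a $(1/2,1/2)$-estimate via the choice of $N$. Varying $(1/\bar p,1/\bar q)$ over the interior of $\mathbf R_d$ and optimizing $N$ sweeps out the interior of $\mathbf Z_d$ — this is exactly the homothety in \eqref{e:ZZ}. There is no separate "ball-average argument near $Z_0$" or "Ionescu argument near $Z_1$": the Ionescu-style restricted-weak-type argument enters only in \S3 for the endpoint sparse bound \eqref{e:Z2}, and is not part of the proof of Lemma~\ref{l:recurse} at all (which only addresses the \emph{interior} of $\mathbf Z_d$). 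So your plan conflates the proof of the interior bound with the proof of the endpoint bound, and in doing so omits the actual mechanism (the $\bar p$-to-$p$ dilation) by which the interior is captured.
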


We complete the proof of the main result. 

\begin{proof}[Proof of First Part of Theorem~\ref{t:Z}]  We can assume that there is a fixed dyadic cube $ E$ 
so that $ f = \mathbf 1_{F}$ is supported on cube $ 3E$, and $ g= \mathbf 1_{G} $ is 
supported on $ E$.   Let $\mathcal Q_E$ be the maximal dyadic subcubes of $E$ for which 
$ \langle f \rangle _{3Q} > C \langle f \rangle _{3E}$, for a large constant $C$.  
Observe that we have, for an appropriate choice of admissible $ \tau (x)$,  
\begin{align*}
\Bigl\langle \sup _{ \lambda \leq \ell (E)} A _{\lambda } f, g \Bigr\rangle 
& \leq  \langle   A _{\tau  } f , g \rangle 
+ \sum_{Q\in \mathcal Q_E} 
\Bigl\langle \sup _{  \lambda  \leq \ell (Q)} A _{\lambda } (f \mathbf 1_{3Q}) , g \mathbf 1_{Q}\Bigr\rangle .  
\end{align*}
The first term is controlled by \eqref{e:recurse}.  For appropriate constant $C \simeq 3^d$, we have 
\begin{equation*}
\sum _{Q\in \mathcal Q_E} \lvert  Q\rvert \leq \tfrac 14 \lvert E\rvert . 
\end{equation*}  
We can  clearly recurse on the second term above to construct our sparse bound.  
This proves a sparse bound for all indicator functions in the interior of $ \mathbf Z_d$. 

Sparse bounds for indicator functions in an open set self-improve to sparse bounds for functions. 
We give the details in the last section, see Lemma~\ref{l:interpolate}.

\end{proof}

We use the corresponding recursive  inequality for spherical averages on $ \mathbb R ^{d}$. 
  Recall that $ \mathcal A _{\lambda }$ is the continuous spherical average. 

\begin{lemma}\label{l:R} \cite{170208594L}*{Lemma 3.4} 
Let $ (\tfrac 1{\bar p}, \tfrac 1{\bar q})$ be in the interior of $ \mathbf R_d$.  
For any dyadic cube $ E$, functions 
 $  \phi  =  \mathbf 1_{F}$ and $ \gamma = \mathbf 1_{G} $ 
supported on $ E$, and any admissible stopping time $ \tau $, there holds 
\begin{equation}\label{e:R}
\lvert  E \rvert ^{-1}  \langle \mathcal A _{\tau } \phi  , \gamma   \rangle \lesssim \langle \phi  \rangle _{E} ^{1/ \bar p} 
\langle \gamma  \rangle _{E } ^{1/\bar q} .  
\end{equation}
\end{lemma}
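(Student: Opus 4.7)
The plan is to decompose the stopping time $\tau$ dyadically and combine the Schlag--Sogge $L^{p}$-improving estimate for the scale-localized spherical maximal function with the admissibility of $\tau$. Set $E_{k}=\{x\in E:\tau(x)\in[2^{k},2^{k+1})\}$ for the level sets and $M_{k}f=\sup_{\lambda\in[2^{k},2^{k+1}]}\mathcal A_{\lambda}f$ for the single-scale maximal operator. Since an admissible stopping time satisfies $\tau\le\ell E$, the estimate reduces to
\[
\langle\mathcal A_{\tau}\phi,\gamma\rangle\le\sum_{k\le\log_{2}\ell E}\int_{E_{k}}M_{k}\phi\cdot\gamma\,dx.
\]

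For each scale $k$, I would cover $E$ by a dyadic grid $\mathcal Q_{k}$ of cubes of side $\simeq 2^{k}$. For $x\in Q\in\mathcal Q_{k}$ the sphere of radius $\tau(x)$ about $x$ lies in a fixed dilate $CQ$, so $M_{k}\phi|_{Q\cap E_{k}}$ depends only on $\phi\mathbf 1_{CQ}$. More importantly, admissibility of $\tau$ forces $\langle\phi\rangle_{CQ}\lesssim\alpha:=\langle\phi\rangle_{E}$ on every $Q\in\mathcal Q_{k}$ meeting $E_{k}$: otherwise $\tau>\ell(CQ)\ge 2^{k+1}$ on $CQ$, contradicting $x\in E_{k}$. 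Fixing an interior point $(1/p_{0},1/q_{0})$ of the Schlag--Sogge $L^{p}$-improving region, one has $\|M_{k}f\|_{q_{0}}\lesssim 2^{k(d/q_{0}-d/p_{0})}\|f\|_{p_{0}}$ with a strictly negative scale exponent. H\"older pairing against $\gamma\mathbf 1_{Q}$ together with the density bound gives a per-cube estimate, and a Jensen-type summation over $Q\in\mathcal Q_{k}$ applied to $\sum_{Q}|G\cap Q|^{1/q_{0}'}$ produces a per-scale contribution of order $\alpha^{1/p_{0}}\beta^{1/q_{0}'}|E|$, where $\beta=\langle\gamma\rangle_{E}$.

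The main obstacle is the sum over $k$. As just described, the per-scale bound is $k$-independent, so a naive summation costs a factor $\log\ell E$. To eliminate this logarithm, I would interpolate the $L^{p_{0}}\to L^{q_{0}}$ estimate against the trivial mass bounds $\|\mathcal A_{\lambda}\|_{L^{1}\to L^{1}},\|\mathcal A_{\lambda}\|_{L^{\infty}\to L^{\infty}}\le 1$, which furnish sparse estimates at the endpoints $(1,0)$ and $(0,1)$. At any $(1/\bar p,1/\bar q)$ strictly in the interior of $\mathbf R_{d}$, the interpolated scale factor becomes genuinely negative in $k$, and the geometric series in $k$ converges to a constant, yielding $|E|^{-1}\langle\mathcal A_{\tau}\phi,\gamma\rangle\lesssim\alpha^{1/\bar p}\beta^{1/\bar q}$. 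The admissibility of $\tau$ is indispensable here: it is precisely the local density bound $\langle\phi\rangle_{CQ}\lesssim\alpha$ that converts the scale-localized operator norm into a sparse form, and without it no geometric gain over scales is available.
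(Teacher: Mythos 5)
There is no proof of this lemma in the paper --- it is cited as \cite{170208594L}*{Lemma 3.4} --- so the comparison below addresses the internal correctness of your argument rather than its relation to a written proof in this source.

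Your setup (dyadic decomposition of $\tau$, cube grid $\mathcal Q_k$ at scale $2^k$, extraction of the density bound $\langle\phi\rangle_{CQ}\lesssim\alpha$ from admissibility, single--scale Schlag--Sogge, Jensen over cubes) is the natural skeleton, and you correctly compute that the per-scale contribution comes out to $\alpha^{1/p_0}\beta^{1/q_0'}\lvert E\rvert$ with no $k$-dependence. The gap is in your proposed repair of the logarithmic loss. The per-scale contribution is \emph{identically} scale-invariant for any $(p_0,q_0)$: the factor $2^{k(d/q_0-d/p_0)}$ in $\lVert M_k\rVert_{p_0\to q_0}$ is exactly cancelled by the factor $(\alpha 2^{kd})^{1/p_0}$ from the density bound and the factor $(\lvert E\rvert/2^{kd})^{1/q_0}$ from the cube count. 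This cancellation is a pure scaling identity, not a boundary effect of the Schlag--Sogge region, so moving to the interior of $\mathbf R_d$, or interpolating with the $L^1$ and $L^\infty$ endpoints (which are themselves scale-invariant), cannot turn the net $k$-exponent negative. The assertion ``the interpolated scale factor becomes genuinely negative in $k$'' is false, and the logarithm remains. The genuine mechanism for geometric summation must come from somewhere else --- e.g.\ exploiting the disjointness of the level sets $E_k$ in a way that is compatible with an exponent $\geq 1$ on $\lvert G\cap E_k\rvert$, or a different organization of the stopping-time decomposition --- and your argument as written does not supply it. A secondary, smaller issue: the admissibility hypothesis is stated for subcubes $Q\subset E$, while you apply it to the dilate $CQ$, which need not lie in $E$ when $Q$ is near $\partial E$; this requires a remark (e.g.\ that $\phi$ is supported in $E$ and one passes to the cube $CQ\cap E$ or redefines admissibility over $3E$).
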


We turn to the proof of Lemma~\ref{l:recurse}.  The restriction to indicator functions will allow us to use interpolation arguments, 
even though our setting has stopping times, and hence is non-linear.  
Let $ L$ be the line through $ (\tfrac12, \tfrac12)$ and $ (\tfrac1p, \tfrac1q)$.  Then, let $ (\tfrac1{ \bar p}, \tfrac1{\bar q})$ be a point 
on  $ L$ that is in the interior of $ \mathbf R_d$, and very close to the boundary.  
(The dashed lines in Figure~\ref{f:B} are examples of the lines $ L$ we are discussing here.) 

This is the point: Fix $ (1/ \bar p, 1/\bar q) \in \mathbf R_d$.  For 
all sufficiently small $ 0< \epsilon < 1$ so that $ (1/ (\bar p + \epsilon ) , 1/(\bar q+ \epsilon )) \in \mathbf R_d$, 
and  integers $  N \in \mathbb N $, we can write $ A _{\tau } f \leq M_1 + M_2$ where 
\begin{align}\label{e:M1}
\lvert  E \rvert ^{-1}  \langle M_1 , g  \rangle &  \lesssim N ^{1+ \epsilon } \langle f \rangle _{E} ^{\frac1{\bar p+ \epsilon }}  \langle g \rangle _{E} ^{\frac{1}{\bar q + \epsilon}}, 
\\ \label{e:M2}
\lvert  E \rvert ^{-1}  \langle M_2 , g  \rangle 
&\lesssim N ^{d \epsilon + \frac{4-d}2 } \langle f \rangle _{E } ^{1/2}  \langle g \rangle _{E} ^{1/2}.  
\end{align}
Implied constants depend upon $\bar p, \bar q$ and $ \epsilon$, but we do not track the dependence.     
Once this is proved, one has 
\begin{equation*}
\lvert  E \rvert ^{-1}  \langle A _{\tau } f , g  \rangle \lesssim 
N ^{1+ \epsilon } \langle f \rangle _{E} ^{\frac1{\bar p+ \epsilon }}  \langle g \rangle _{E} ^{\frac1 {\bar q+ \epsilon } } 
+ N ^{d\epsilon + \frac{4-d}2 } \langle f \rangle _{E } ^{1/2} \langle g \rangle _{E} ^{1/2}. 
\end{equation*}
Choosing $ N$ to minimize the right hand side, and letting $ (1/\bar p, 1/\bar q)$ and $ 0< \epsilon<1 $ vary 
completes the proof.   Indeed, ignoring $ \epsilon $'s, we see that the value of $ p$ is given by 
\begin{equation*}
\tfrac1{p} =\tfrac1{\bar p} + \tfrac2{d-2} \bigl(\tfrac{1}2- \tfrac1{\bar p} \bigr) 
= \tfrac2{d-2} \cdot  \tfrac{1}2 + \tfrac{d-4} {d-2} \cdot  \tfrac1{\bar p}, 
\end{equation*}
Compare this to our description of the extreme points of $ \mathbf Z_d$ in  \eqref{e:ZZ}.  
Thus, our Lemma follows.

\smallskip 
In proving \eqref{e:M1} and \eqref{e:M2}, it suffices, given $ 0< \epsilon < 1$, to prove the 
statement for sufficiently large $ N$.  We will do so for $ N > N_0$, for a 
sufficiently large choice of $ N_0 > 0$.    
Indeed, we find it necessary to use an absorption argument. We show that 
\begin{equation}\label{e:absorb}
A _{\tau } f  \leq M_1 + M_2 + \tfrac 12 A _{\tau } f , 
\end{equation}
where $ M_1 $ and $ M_2$ are as in \eqref{e:M1} and \eqref{e:M2}.  

The reader can consult Figure~\ref{f:Fulltree} for a guide to the argument. 
For a technical reason, we assume that $ F \subset (2 \mathbb Z ^{d}) + \delta _f$, 
and $ G \subset (2 \mathbb Z ^{d}) + \delta _g$. 
Here, $ \delta _f , \delta _g \in \{0,1\} ^{d}$.  
This can be assumed without loss of generality.

\tikzset{
 treenode/.style = {shape=rectangle, rounded corners,
    draw, align=center,
    top color=white, bottom color=blue!10}}

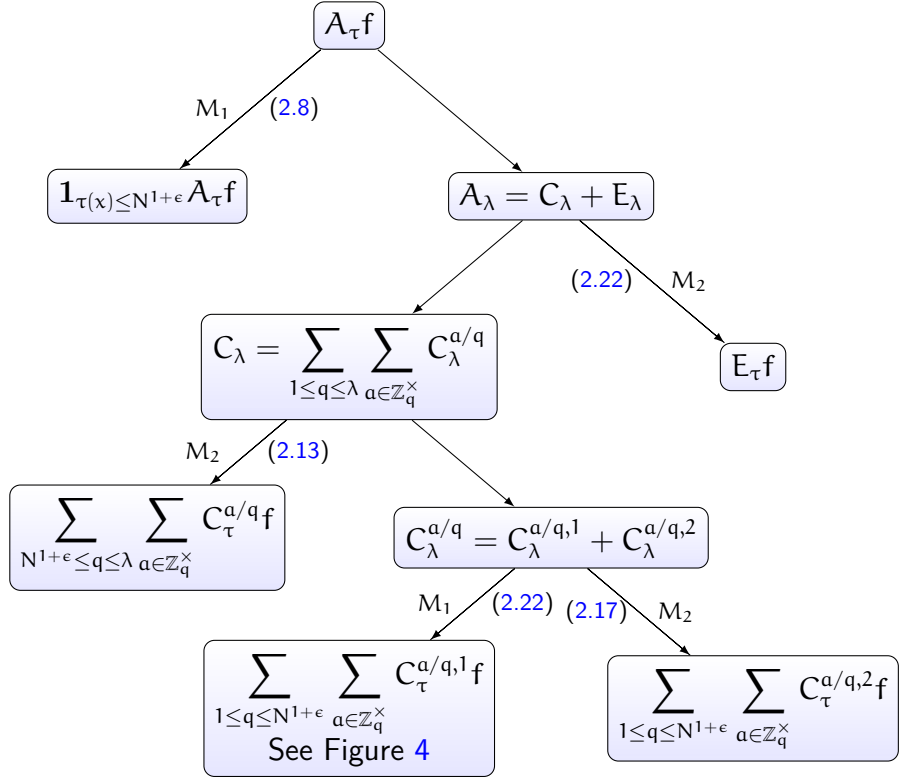
\begin{figure} 
\begin{tikzpicture}[sibling distance=13em, level distance=5.5em, 
    edge from parent/.style = {draw, -latex, font=\scriptsize}]
  \node [treenode] {$ \displaystyle  A _{\tau  } f $}
    child  { node [treenode]{$ \displaystyle  \mathbf 1_{ \tau (x) \leq  N^{1 + \epsilon}}  A _{\tau   } f  $} 
     edge from parent node [left] {$M_1$ \ }
     edge from parent node [right] {\ \eqref{e:zM11} }} 
    child { node [treenode]{$  \displaystyle A _{\lambda } = C _{\lambda } + E_{\lambda }$}
      child { node [treenode]{ $\displaystyle C _{\lambda} = \sum _{1\leq q \leq \lambda } \sum _{a \in \mathbb Z _q ^{\times }} C^{a/q}_\lambda $}
        child { node [treenode]{ $\displaystyle   \sum_{ N^{1 + \epsilon} \leq q \leq \lambda } \sum _{a \in \mathbb Z _q ^{\times }}   C ^{a/q} _{\tau } f  $}  
 		edge from parent node [left] {$M_2$ \ \ }
 		edge from parent node [right] {\ \eqref{e:zM22}}}   
        child { node [treenode] { $\displaystyle  C ^{a/q} _{\lambda } =  C ^{a/q,1} _{\lambda } +  C ^{a/q,2} _{\lambda } $} 
        child { node [treenode] {$\displaystyle  \sum_{1 \leq q \leq  N^{1 + \epsilon}} \sum  _{a \in \mathbb Z _q ^{\times }}   C ^{a/q,1} _{\tau } f   $ \\ See Figure~\ref{f:teriary}}
 		edge from parent node [left] {$M_1$ \ }
 		edge from parent node [right] {\  \eqref{e:M1Q}}} 
 		child { node [treenode] {$\displaystyle    \sum_{1\leq q \leq N^{1 + \epsilon}} \sum _{a\in \mathbb Z_q ^{\times}} 
 C ^{a/q,2} _{\tau } f  $}
   edge from parent node [right] {\ $M_2$}
   edge from parent node [left] {\eqref{e:M23}}
        }}  }
      child { node [treenode] {$\displaystyle      E_{\tau } f   $}
       edge from parent node [right] {\ $M_2$}
       edge from parent node [left] {\eqref{e:M1Q}\  \ }}   };
\end{tikzpicture}
\caption{The flow of the proof of  \eqref{e:recurse}.  The nodes of the tree indicate the different elements 
of the decomposition, and a label on an arrow shows which of  $ M_1$ or $ M_2$ that term contributes to.  Above, $ \lambda $ represents a fixed choice of radius, and $ \tau = \tau (x)$ an admissible  choice of radius. 
For space considerations,   several terms of the form $  e_q (- \lambda ^2 a)$ have been omitted, compare to \eqref{e:Cfull}.  
}  
\label{f:Fulltree}
\end{figure}

\subsubsection*{Small values of $ \tau $}
The terms $ M_1$ and $ M_2$ have several components.  
The first contribution to $ M_1$ is the term 
$ M _{1,1} = \mathbf 1_{\tau \leq N ^{1+ \epsilon }} A _{\tau } f $.  
Our verification that $ M _{1,1}$ satisfies \eqref{e:M1} is our first application of 
the $ \mathbb R ^{d}$ inequality \eqref{e:R}.  

We need functions on $ \mathbb R ^{d}$. Take $ \phi (x) = \sum _{n \in \mathbb Z ^{d}} \mathbf 1_{F} (n) 
\mathbf 1_{n + [-1,1) ^{d}}(x)$, and define $ \gamma $ similarly.   By the reduction we made above, these are indicator functions.  
Moreover, if $ \tau $ is admissible stopping time for $ f$, then it is for $ \phi $ as well.  
The inequality 
\eqref{e:R} holds for these two functions on $ \mathbb R ^{d}$.  Then, notice that 
we can compare the discrete and continuous spherical averages as follows.  
\begin{equation}  \label{e:zM11}
A _{\tau  } f (x) \lesssim  \tau  \mathcal A _{\tau  } \phi (x).  
\end{equation}
Therefore, if we require that $ \tau \leq N^{1 + \epsilon} $, we see that \eqref{e:R} implies that $ M _{1,1}$ 
satisfies \eqref{e:M1}.

\subsubsection*{The Decomposition}
Below, we assume that $ \tau > N ^{1+ \epsilon }$ pointwise. At this point, we need a decomposition of 
$ A _{\lambda } f $ into a family of multipliers. We recall this from Magyar, Stein and Wainger \cite{MSW}.  
Upper case letters denote a convolution operator, and lower case letters denote the corresponding multiplier.  Let $ e (x) = e ^{2 \pi i x}$ and for integers $ q$, $ e_q (x) = e (x/q)$.   
\begin{align}  \label{e:Afull}
A _{\lambda } f &= C _{\lambda } f + E_{\lambda } f , 
\\ \label{e:Cfull}
C _{\lambda } f &= \sum_{1\leq q \leq  \lambda } \sum_{ a \in \mathbb Z _q ^{\times }}  e_q (- \lambda ^2 a)  C ^{a/q} _{\lambda } f ,
\\  \label{e:caq}
c ^{a/q} _{\lambda }(\xi) &= \widehat {C ^{a/q} _{\lambda }}   (\xi ) = \sum_{\ell \in \mathbb Z ^{d}_q} G (a/q, \ell ) \widetilde \psi_q (\xi - \ell /q) 
\widetilde {d \sigma _{\lambda }} (\xi - \ell /q) , 
\\  \label{e:Gauss}
G (a/q, \ell ) &= q ^{-d}\sum_{n \in \mathbb Z _q ^{d}} e_q ( \lvert  n\rvert ^2 a+n \cdot \ell  ). 
\end{align}
The term $ G (a/q, \ell )$ is a normalized Gauss sum.  
In \eqref{e:Cfull}, the sum over $ a \in \mathbb Z _q ^{\times }$ means that $ (a,q)=1$.  
In \eqref{e:caq}, the hat indicates the Fourier transform on $ \mathbb Z ^{d}$, and the notation 
conflates the operator $ C _{\lambda } ^{a/q}$, and the kernel. All our operators are convolution operators 
or maximal operators formed from the same.  
The function $ \psi $ is a Schwartz function on $ \mathbb R ^{d}$ which satisfies 
\begin{equation}\label{e:psi}
 \mathbf 1_{[-\frac{1}{2} , \frac{1}{2}] }(|\xi|) \leq \widetilde \psi  (\xi ) \leq \mathbf 1_{[-1,1] }(|\xi|). 
\end{equation}
Above, $ \widetilde { f }$ denotes the Fourier transform of $f$ on $ \mathbb R ^{d}$, and $ \widetilde  \psi _q (\xi ) = \widetilde \psi (q \xi )$.  
The Fourier transform on $ \mathbb R ^{d}$  of   $ d \sigma _{\lambda }$ is 
$ \widetilde {d \sigma _{\lambda }} $. 
Finally, we will use the notation $ \lambda $ for describing multipliers and so on, and 
using  $\tau $ especially when obtaining estimates. In this way, many supremums will be suppressed 
from the notation.

\bigskip 

\subsubsection*{The Error Term $ E _{\lambda }$}
The first contribution to $ M_2$ is  $ M _{2,1} = \lvert  E_{\tau } f \rvert $.  
The inequality below is from 
\cite{MSW}*{Prop. 4.1}, and it     implies that $ M _{2,1}$ satisfies \eqref{e:M2} 
since $ \tau > N ^{1+ \epsilon }$.  
\begin{equation} \label{e:zM21} \bigl\lVert \sup _{\Lambda \leq \lambda \leq 2 \Lambda } \lvert  E_{\lambda }  \cdot  \rvert \bigr\rVert_ {2\to 2 }  \lesssim \Lambda ^{\frac{4-d}2}, \qquad \Lambda \geq 1. 
\end{equation}

\subsubsection*{Large Denominators}
The second contribution to $ M_2$ is 
\begin{equation} \label{e:zM22}
M _{2,2} =  \Bigl\lvert \sum_{N ^{ 1+ \epsilon } \leq q \leq \tau } e_q (- \lambda ^2 a) C ^{a/q} _{\tau } f \Bigr\rvert. 
\end{equation}
The estimate below  is a result of Magyar, Stein and Wainger \cite{MSW}*{Prop. 3.1}, and it verifies 
that $ M _{2,2}$ satisfies \eqref{e:M2}.  We need only sum it over $ 1\leq a \leq q$, and $ q > N ^{1+ \epsilon }$.  
\begin{equation}  \label{e:MSWfactor}
\lVert \sup _{\lambda > q} \lvert  C ^{a/q} _{\lambda }  f  \rvert \rVert _{2} 
\lesssim q ^{- \frac d2} \lVert f\rVert_2 . 
\end{equation}

\subsubsection*{Small Denominators: A Secondary Decomposition}
It remains to bound the small denominator case, namely 
\begin{equation*}
\sum_{1\leq q \leq N^{1 + \epsilon}}  \sum_{ a \in \mathbb Z _q ^{\times }} e_q (- \lambda ^2 a)C ^{a/q} _{\tau } f 
\end{equation*}
with further contributions to $ M_1$ and $ M_2$. 
Write 
$ C ^{a/q} _{\tau } = C ^{a/q,1} _{\tau } + C ^{a/q,2} _{\tau }$, with this understanding.  
For an integer $ 1\leq Q \leq N/2$,  and $  Q\leq q < 2 Q$, define 
\begin{equation}\label{e:exC}
 \widehat {C _{\lambda } ^{a/q,1}}  (\xi ) 
=  \sum_{\ell \in \mathbb Z ^{ d}} G (a, \ell, q )\widetilde\psi_ {q} (\xi - \ell /q) \widetilde\psi_ {\lambda Q/N} (\xi - \ell /q)  
\widetilde{d \sigma _{\lambda }} (\xi - \ell /q).  
\end{equation}
Above, we have adjusted the cutoff around each point $ \ell /q \in \mathbb T ^{d}$.  

\subsubsection*{Small Denominators: The $ \ell ^2 $ Part}
We complete the construction of the term in $ M_2$, \eqref{e:M2},  by showing that 
\begin{equation}   \label{e:Complete}
\bigl\lVert \sup _{N^{1 + \epsilon} \leq \lambda \leq \ell (E)}
 \lvert  
 C ^{a/q,2} _{\lambda } f  
 \rvert \bigr\rVert_2 \lesssim  q ^{-1} N ^{- \frac{d-2}2} \lVert f\rVert_2. 
\end{equation}
It follows  that 
\begin{equation} \label{e:M23}
\Bigl\lVert 
   \sum_{1\leq q \leq N^{1 + \epsilon} }  \sum_{a \in \mathbb Z _q ^{\times }} 
  e_q (- \lambda ^2 a)C ^{a/q,2} _{\tau } f  
\Bigr\rVert_2 \lesssim   N ^{- \frac{d-4}2 + \epsilon} \lVert f\rVert_2. 
\end{equation}
This is the third and final contribution to $ M _{2}$.   We remark that the proof detailed below is a quantitative variant of the proof of Magyar, Stein and Wainger's inequality \eqref{e:MSWfactor}.  
The inequality \eqref{e:Complete} is \cite{I}*{(2.14)}, but we include details here.

Let $ m$ be a smooth function supported on $ [-1/2,1/2] ^{d}$, and let $ T_m$ be the corresponding 
multiplier operator, either on $ \mathbb Z ^{d}$ or $ \mathbb R ^{d}$, with the notation indicating in 
which setting we are considering the multiplier.  

This is a factorization argument from Magyar,  Stein and Wainger \cite{MSW}*{pg. 200}. 
Using the notation of \eqref{e:mzeta} to define $ M _{\psi _{Q/2}}$ below, 
for $ Q\leq q \leq 2Q$, we have  
\begin{align}  \label{e:redux}
\widehat {C ^{a/q,2} _{\lambda } } (\xi ) &= 
\widehat { M _{\psi _{Q/2}, q} } (\xi ) \cdot   \sum_{\ell \in \mathbb Z ^{ d}}  \widetilde\psi _{q} (\xi - \ell /q) (1- \widetilde\psi _{\lambda Q/N} (\xi - \ell /q) )
\widetilde{d \sigma _{\lambda }} (\xi - \ell /q) 
\\
&:= \widehat {  M _{\psi _{q/2}, q}} (\xi )  \cdot  \widehat {C  ^{a/q,3} _{\lambda }}  (\xi ). 
\end{align}
That is, the operator in question factors as $ C ^{a/q,2} _{\lambda } = C ^{a/q,3} _{\lambda } \circ M _{\psi _{Q/2}, q}$.  
Notice that by the Gauss sum estimate, we have 
\begin{equation} \label{e:G2}
\lVert  M _{\psi _{Q/2}, q} \rVert _{ 2 \to 2 } \lesssim Q ^{- \frac{d}2}, 
\end{equation}

In controlling the supremum, we need only consider the supremum over ${C ^{a/q,3} _{\lambda } }$. 
The transference lemma \cite{MSW}*{Cor. 2.1} allows us to estimate this supremum on $ L ^2 (\mathbb R ^{d})$.  We have 
\begin{equation} \label{e:G3}
\bigl\lVert 
{C ^{a/q,3} _{\tau  } } 
\bigr\rVert _{\ell ^2 (\mathbb Z  ^{d}) \to \ell ^{2 } (\mathbb Z ^{d})}
\lesssim 
\bigl\lVert \sup _{ \lambda >0} 
\bigl\lvert  {\Psi _{\lambda ,q}} \cdot  \bigr\rvert
\bigr\rVert _{L^2 (\mathbb R  ^{d}) \to L ^{2 } (\mathbb R ^{d})}, 
\end{equation}
where $ \widetilde { \Psi }_{\lambda ,q}  =   \widetilde\psi_q   (1-\widetilde\psi _{\lambda Q/N})    \widetilde{d \sigma _{\lambda }}$.  
To estimate this last norm on $ L ^2 (\mathbb R ^{d})$, we use this Lemma of Bourgain.
\begin{lemma}\label{l:B} \cite{MR812567}*{Prop. 2} 
Let $ m$ be a smooth function on $ \mathbb R ^{d}$. We have 
\begin{gather}\label{e:B1}
\Bigl\lVert  \sup _{r>0}  \lvert  T _{m (r \cdot )}  \cdot \rvert  \Bigr\rVert_2  
\lesssim \sum_{j\in \mathbb Z } \alpha _{j} ^{1/2} ( \alpha _{j} ^{1/2}+ \beta _j ^{1/2}) 
\end{gather}
where 
\begin{gather}
 \label{e:B2<} 
\alpha _j = \lVert  \mathbf 1_{2 ^{j} \leq \lvert  \xi \rvert \leq 2 ^{j+1} } m (\xi )\rVert _{\infty } 
\quad \textup{and} \qquad 
\beta  _j = \lVert  \mathbf 1_{2 ^{j} \leq \lvert  \xi \rvert \leq 2 ^{j+1} } \nabla m (\xi ) \cdot \xi \rVert _{\infty }.  
\end{gather}
\end{lemma}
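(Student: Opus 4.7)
The plan is to perform a dyadic decomposition in frequency and treat each piece by the classical $gg'$ device. Fix a smooth partition of unity $\{\eta_j\}_{j \in \mathbb Z}$ on $\mathbb R ^{d} \setminus \{0\}$ with $\eta_j$ supported in the annulus $\{2^{j-1} \leq \lvert \xi \rvert \leq 2^{j+2}\}$, and write $m = \sum_j m_j$ where $m_j = m \eta_j$. By the triangle inequality it suffices to prove, for each $j$, the single-piece bound
\begin{equation*}
\Bigl\lVert \sup_{r>0} \lvert T_{m_j(r\cdot)} f \rvert \Bigr\rVert_2 \lesssim \alpha_j^{1/2}\bigl(\alpha_j^{1/2} + \beta_j^{1/2}\bigr) \lVert f \rVert_2,
\end{equation*}
and sum over $j$.

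Fix $j$ and set $g(s,x) = T_{m_j(s \cdot)} f(x)$. The argument is driven by the elementary identity
\begin{equation*}
\lvert g(r,x)\rvert^2 = \lvert g(r_0,x)\rvert^2 + 2\, \mathrm{Re} \int_{r_0}^{r} g(s,x)\, \overline{\partial_s g(s,x)}\, ds, \qquad r, r_0 > 0.
\end{equation*}
Taking the supremum in $r$, integrating in $x$, and applying Cauchy--Schwarz in $s$ with weights $s^{-1/2}$ and $s^{1/2}$ gives
\begin{equation*}
\Bigl\lVert \sup_{r} \lvert g(r,\cdot)\rvert \Bigr\rVert_2^2 \leq \lVert g(r_0,\cdot) \rVert_2^2 + 2\Bigl( \int_0^\infty \lVert g(s)\rVert_2^2 \, \tfrac{ds}{s} \Bigr)^{1/2} \Bigl( \int_0^\infty \lVert \partial_s g(s)\rVert_2^2 \, s \, ds \Bigr)^{1/2}.
\end{equation*}

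The three terms are then evaluated directly by Plancherel. The boundary term satisfies $\lVert g(r_0,\cdot)\rVert_2 \leq \alpha_j \lVert f \rVert_2$ for any fixed $r_0$. For the two integrals, the identities $\widehat{g(s)}(\xi) = m_j(s\xi)\, \widehat{f}(\xi)$ and $\widehat{\partial_s g(s)}(\xi) = (\xi \cdot \nabla m_j)(s\xi)\, \widehat{f}(\xi) = s^{-1}(\eta \cdot \nabla m_j)(\eta)\big|_{\eta=s\xi}\, \widehat{f}(\xi)$ show that the extra factor $s^{-1}$ produced by the differentiation is precisely what is needed so that, after the change of variable $t = s\lvert \xi \rvert$ (for each fixed $\xi$), both integrals reduce to scale-invariant integrals $\int_0^\infty \lvert F(t\xi/\lvert \xi \rvert)\rvert^2 \, \tfrac{dt}{t}$ over the logarithmic interval $t \in [2^{j-1}, 2^{j+2}]$. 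The pointwise bounds $\lvert m_j\rvert \leq \alpha_j$ and $\lvert \eta \cdot \nabla m_j(\eta)\rvert \leq \beta_j$ then yield the integral factors $\alpha_j \lVert f\rVert_2$ and $\beta_j \lVert f \rVert_2$ respectively. Combining gives $\lVert \sup_r \lvert g \rvert \rVert_2^2 \lesssim (\alpha_j^2 + \alpha_j \beta_j) \lVert f\rVert_2^2$, whose square root is the claimed single-piece estimate.

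The only substantive point is the retention of the boundary term $\lVert g(r_0,\cdot)\rVert_2$ in the opening identity: sending $r_0 \to \infty$ would drop the $\alpha_j^{1/2}\cdot \alpha_j^{1/2}$ contribution, which is essential whenever $\beta_j$ is very small---most strikingly when $m$ contains a degree-zero homogeneous piece, for which $T_{m(r\cdot)} f$ is in fact independent of $r$. Everything else in the argument is routine Plancherel and scaling.
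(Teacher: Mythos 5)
The paper does not prove Lemma~\ref{l:B}; it is cited verbatim from Bourgain \cite{MR812567}*{Prop.\ 2}. Your proof reproduces Bourgain's standard argument: a dyadic frequency decomposition followed by the fundamental-theorem-of-calculus square-function device, with Plancherel and the scale-invariance of $ds/s$ delivering the estimates. The argument is correct, up to one imprecision worth flagging: the asserted pointwise bound $\lvert \eta \cdot \nabla m_j(\eta)\rvert \leq \beta_j$ is not literally true, since $\nabla m_j = (\nabla m)\eta_j + m\,\nabla\eta_j$, and the cutoff term contributes $\lvert m\rvert\,\lvert\eta\cdot\nabla\eta_j\rvert \lesssim \alpha_j$ (as $\eta_j$ is a dyadic bump, $\lvert \xi\cdot\nabla\eta_j(\xi)\rvert = O(1)$ on its support). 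The correct bound is $\lvert \eta \cdot \nabla m_j(\eta)\rvert \lesssim \alpha_j + \beta_j$, with nearby indices $j\pm 1$ also appearing. This does not damage the conclusion, since $\alpha_j^{1/2}(\alpha_j+\beta_j)^{1/2} \simeq \alpha_j^{1/2}(\alpha_j^{1/2}+\beta_j^{1/2})$; indeed the $\nabla\eta_j$ contribution is itself a second route to the $\alpha_j$ term in the final bound, which somewhat undercuts your claim that retaining the boundary term $\lVert g(r_0,\cdot)\rVert_2$ is the essential point. Retaining it is the cleaner bookkeeping, but with the corrected pointwise bound one could also let $r_0 \to \infty$ (using the compact support of $m_j$) and still land on $\alpha_j^{1/2}(\alpha_j^{1/2}+\beta_j^{1/2})$. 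Either way, the result stands, and your accounting of the localization to $t \in [2^{j-1}, 2^{j+2}]$ after the change of variable $t = s\lvert\xi\rvert$ is exactly the right point.
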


We bound the right side of \eqref{e:G3}.  Composition with $ T _{\psi _q}$ is uniformly bounded on $ L ^2 $.  The multiplier  in question is then, 
$ m (\xi ) = (1 - \widetilde\psi _{Q/N})  (\xi )\widetilde{d \sigma _1} (\xi )$.  This is identically zero 
for $ \lvert  \xi \rvert \lesssim N/Q $.  That means that for the terms in \eqref{e:B2<}, we need only consider 
$ 2 ^j \gtrsim N/Q\geq 100 $. Recall
the standard stationary phase estimate below.  
\begin{equation} \label{e:stationary}
\lvert  \nabla  \widetilde{d \sigma _1} (\xi ) \rvert + \lvert  \widetilde{d \sigma _1} (\xi ) \rvert \lesssim \lvert  \xi \rvert ^{- \frac{d-1}2}. 
\end{equation} 
Hence, the bound for our multiplier is 
\begin{align*}
\bigl\lVert \sup _{\lambda >0}  
\bigl\lvert  {\Psi _{\lambda ,q}} \cdot  \bigr\rvert
\bigr\rVert _{L^2 (\mathbb R  ^{d}) \to L ^{2 } (\mathbb R ^{d})}
& \lesssim 
\sum_{j \;:\; 2 ^{j} \geq N/Q}  2 ^{- j \frac{d-1}2} \cdot 2 ^{-j \frac{d-3}2}
\\&\lesssim (Q/N) ^{ \frac{d-2}2}  . 
\end{align*}
This estimate combined with \eqref{e:G2} and \eqref{e:G3} complete the proof of \eqref{e:Complete}.  
\bigskip 

\subsubsection*{Small Denominators: The Sparse Part}
Recalling the notation from \eqref{e:exC}, we turn to   
\begin{equation*}
M_{1,2}f =  \sum_{1\leq q \leq N}  
 \sum_{ a \in \mathbb Z _q ^{\times }}
e_q (- \lambda ^2 a)C ^{a/q,1} _{\tau } f 
\end{equation*}
and show that this term is as in \eqref{e:absorb}.  This is the term in which the absorbing term 
$ \tfrac{1}2 A _{\tau } f$ in \eqref{e:absorb} arises.  
It is also the core of the proof.

Define 
\begin{equation} 
M _{1,Q} f = \sum_{Q\leq q < 2Q} \sum_{ a \in \mathbb Z _q ^{\times }}  e_q (- \lambda ^2 a)C ^{a/q,1} _{\tau } f 
, \qquad  1\leq Q \leq N/2,  
\end{equation}
Above, and below, we will treat $ M _{1,Q} $ as an operator, as we have yet to tease out some of its additional properties.  The main estimate to prove is 
\begin{equation}\label{e:M1Q}
\begin{split}
 M _{1,Q} f & \leq  \overline M _{1,Q} f + N^{- \epsilon} A _{\tau } f , 
 \\
\lvert  E\rvert ^{-1}   \langle     \overline M_{1,Q} f, g \rangle   
& \lesssim   N  ^{1+ \epsilon /2}  \langle f \rangle _{E} ^{\frac1{\bar p + \epsilon} } \langle g \rangle _{E} ^{ \frac 1{\bar q + \epsilon } }. 
 \end{split} 
\end{equation}
 This summed over  dyadic $ 1\leq Q \leq N/2$ to complete the proof of the absorption inequality \eqref{e:absorb}.  
This step requires that $ N $ be sufficiently large,  $ N > \kappa ^{1/\epsilon }$, but that is 
sufficient for our purposes.

We need the estimate \eqref{e:R} on $ \mathbb R ^{d}$. We also need kernel estimates for the operators $ M _{1,Q}$, and for that we require this preparation, 
which has been noted before  \cites{MSW}, \cite{I}*{pg. 1415}. 
For a function $ \zeta  $ with \( \widetilde \zeta \)  supported  on $[-1,1] ^{d}$, define a  family of Fourier multipliers by 
\begin{equation} 
\widehat {M _{\zeta, q}}  (\xi ) = 
\sum_{\ell \in \mathbb Z ^{d}_q} 
G (a/q, \ell ) \widetilde \zeta (\xi - \ell /q).  
\end{equation}
By inspection, the Gauss sum map $ \ell \mapsto G (a/q, \ell )$ is the Fourier transform of $ e_q  (  \lvert  x\rvert  ^2 a ) $ 
as a function on $ \mathbb Z _{q} ^{d}$.  From this, and a routine computation, it follows that 
\begin{equation}\label{e:mzeta}
 M _{\zeta, q} (x) = e_q (  a\lvert  x\rvert ^2)  \zeta (x).  
\end{equation}
(Here we identify the kernel of the convolution operator, and the operator itself.)

It follows that the kernel of 
$  M_{1,Q} f $ is 
\begin{equation}\label{e:Ram}
\begin{split}
  M_{1,Q}  (n)  & =  \psi _{\tau Q/N} \ast d \sigma _{\tau } (n) 
\sum_{Q\leq q \leq 2Q}   \sum_{a \in \mathbb Z ^{\times } _{q}} e_q (a( \lvert  n\rvert ^2  - \lambda ^2 ) )
\\
 & =  P_{Q, \tau  } (n) \cdot \mathsf C _{Q} ( \lvert  n\rvert ^2  - \lambda ^2  ) . 
  \end{split}  
\end{equation} 
Note that $ P _{Q, \tau }$ is a maximal average over annuli of outer radius $ \tau $, and width  
about $ \tau Q/N < \tau  $.  
The second term above is related to  Ramanujan sums, defined by 
\begin{equation} \label{e:RamDef} 
\mathsf c _{q } (m) =  \sum_{a \in \mathbb Z ^{\times } _{q}} e_q (a m ) , \qquad m \in \mathbb Z , 
\end{equation}
so that in \eqref{e:Ram},  $\mathsf C _{Q} = \sum_{Q\leq q \leq 2Q}\mathsf c _{q }$.  
 Ramanujan sums satisfy very good cancellation properties.  
The properties we will need are summarized in 

\begin{lemma}\label{l:Ram} These two estimates hold, for any $ k \in \mathbb N$, and $ \epsilon >0$, 
\begin{enumerate}
\item For any $ Q$ and $ n$,  $ \lvert  \mathsf C _Q (n)\rvert \leq Q ^2  $.  

\item   There holds 
\begin{equation}\label{e:Csup}
 \max _{ 0<  m \leq Q ^{k}}  \lvert  \mathsf C _{Q} (m)\rvert  \lesssim Q ^{1 + \epsilon }. 
\end{equation}

\item  For   $ M > Q ^{k}$, 
\begin{equation} \label{e:Ram<}
  \Biggl[  \frac{1}M \sum_{ m \leq M}  \lvert  C _{Q} (m)\rvert ^{k} 
  \Biggr] ^{1/k}
\lesssim   Q ^{1 + \epsilon }   .   
\end{equation}

\end{enumerate}
The implied constants depend upon $ k$ and $ \epsilon $.   
\end{lemma}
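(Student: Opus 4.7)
The plan is to reduce all three bounds to a single master estimate coming from the classical factorization of the Ramanujan sum. Using the identity $\mathsf c_q(m) = \sum_{d \mid \gcd(q,m)} \mu(q/d)\, d$, writing $q = dr$, and swapping the order of summation, one finds
\begin{equation*}
\mathsf C_Q(m) \;=\; \sum_{\substack{d \mid m \\ d < 2Q}} d\, \Phi_Q(d), \qquad \Phi_Q(d) \;:=\; \sum_{Q/d \leq r < 2Q/d} \mu(r).
\end{equation*}
The interval $[Q/d, 2Q/d)$ contains at most $Q/d + 1$ integers, so $|\Phi_Q(d)| \leq Q/d + 1$ and hence $d\,|\Phi_Q(d)| \leq Q + d \leq 3Q$ for every relevant $d$. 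This gives the master bound
\begin{equation*}
|\mathsf C_Q(m)| \;\leq\; 3Q \cdot \sigma_0(m;\leq 2Q), \qquad \sigma_0(m;\leq 2Q) \;:=\; |\{d \mid m : d \leq 2Q\}|.
\end{equation*}

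Parts (1) and (2) are then immediate. For (1), the trivial estimate $\sigma_0(m;\leq 2Q) \leq 2Q$ yields $|\mathsf C_Q(m)| \lesssim Q^2$. For (2), the classical divisor bound $\sigma_0(m) \leq C_\epsilon\, m^\epsilon$ shows that whenever $m \leq Q^k$ one has $\sigma_0(m;\leq 2Q) \leq C_{\epsilon/k}\, Q^\epsilon$, and the master bound delivers $|\mathsf C_Q(m)| \lesssim_\epsilon Q^{1+\epsilon}$.

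For (3), I would bound the $k$-th moment by a tuple expansion:
\begin{equation*}
\sum_{m \leq M} \sigma_0(m;\leq 2Q)^k \;=\; \sum_{d_1, \ldots, d_k \leq 2Q} \bigl|\{m \leq M : \mathrm{lcm}(d_1, \ldots, d_k) \mid m\}\bigr| \;\leq\; M \sum_{d_1, \ldots, d_k \leq 2Q} \frac{1}{\mathrm{lcm}(d_1, \ldots, d_k)}.
\end{equation*}
Grouping tuples by their lcm $L$, the number of $k$-tuples of divisors of $L$ with lcm exactly $L$ is the multiplicative function $\tau'_k(L)$ determined by $\tau'_k(p^a) = (a+1)^k - a^k$. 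Comparing its Dirichlet series $\sum_L \tau'_k(L)\,L^{-s}$ to $\zeta(s)^k$ (both have Euler factor $1 + k p^{-s} + O(p^{-2s})$) and applying a routine Tauberian argument gives $\sum_{L \leq X} \tau'_k(L)/L \lesssim_k (\log X)^k$; taking $X = (2Q)^k$ yields $\sum_{d_i \leq 2Q} 1/\mathrm{lcm}(d_i) \lesssim_k (\log Q)^k$. Combined with the master bound,
\begin{equation*}
\Bigl[\frac{1}{M} \sum_{m \leq M} |\mathsf C_Q(m)|^k\Bigr]^{1/k} \;\leq\; 3Q \cdot \Bigl(\frac{1}{M}\sum_m \sigma_0(m;\leq 2Q)^k\Bigr)^{1/k} \;\lesssim_k\; Q\,(\log Q)^{O_k(1)} \;\lesssim_{k,\epsilon}\; Q^{1+\epsilon},
\end{equation*}
the last inequality absorbing the polylogarithm into any $Q^\epsilon$.

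The main obstacle I foresee is getting the moment bound on the truncated divisor function $\sigma_0(\,\cdot\,;\leq 2Q)$ sharp enough. The trivial estimate $\sigma_0(m;\leq 2Q) \leq 2Q$ is far too weak and would only give $Q^{2-1/k}$, missing the target. The required polylogarithmic saving reflects the genuinely arithmetic fact that the lcm of a typical $k$-tuple of small divisors is much larger than any single one, and it is precisely this feature that the Dirichlet-series comparison captures.
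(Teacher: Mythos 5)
Your proposal is correct in substance, with one small computational slip. For parts (1) and (2), your derivation through Kluyver's identity $\mathsf c_q(m)=\sum_{d\mid(q,m)}\mu(q/d)\,d$ is essentially equivalent to the paper's route: the paper instead uses the pointwise inequality $\lvert\mathsf c_q(m)\rvert\le (q,m)$, proved by multiplicativity on prime powers, and summing over $Q\le q\le 2Q$ yields the same master estimate $\lvert\mathsf C_Q(m)\rvert\lesssim Q\cdot\#\{d\mid m: d\lesssim Q\}$. The genuine divergence is in part (3): the paper defers entirely to Bourgain \cite{MR1209299} and to outside references, whereas you give a self-contained elementary argument by expanding the $k$-th moment of the truncated divisor function into $k$-tuples, grouping by lcm, and bounding $\sum_{d_1,\dots,d_k\le 2Q}1/\mathrm{lcm}(d_1,\dots,d_k)$ via an Euler product (Mertens suffices; no Tauberian machinery is needed). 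This is a pleasant unification that proves all three parts from a single master bound, and is arguably preferable to a citation. The one error is the claimed Euler factor of $\sum_L\tau'_k(L)L^{-s}$: since $\tau'_k(p)=2^k-1$, the factor is $1+(2^k-1)p^{-s}+O(p^{-2s})$, not $1+kp^{-s}+O(p^{-2s})$, so the correct estimate is $\sum_{L\le X}\tau'_k(L)/L\lesssim_k(\log X)^{2^k-1}$ rather than $(\log X)^k$. Since any fixed power of $\log Q$ is absorbed into $Q^{\epsilon}$, the conclusion \eqref{e:Ram<} is unaffected, but the exponent should be corrected.
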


\begin{proof} 
The first estimate is trivial, but we include it for the sake of clarity. Note that $ C _{Q} (0) \simeq Q ^2 $, which fact will arise in the absorption argument below.

An argument for the  second inequality  \eqref{e:Csup} begins with the 
inequality $ \lvert  \mathsf c _{q} (m)\rvert  \leq (q,m)$, for $ m>0$. 
This can be checked by inspection if $ q$ is a power of a prime. The general case follows 
as both sides are multiplicative functions.  

Then, of course we have for any $ 1\leq d \leq q$, 
\begin{equation*}
\sum_{k \;:\; dk \leq Q} d \leq Q.  
\end{equation*}
It follows that 
\begin{equation*}
\sum_{q \leq Q} (q,m) \leq Q \sum_{d  \leq q\;:\; d |m } 1= Q \delta (m; Q), 
\end{equation*}
where $ \delta (m;Q)$ is the number of divisors of $ m$ that are less than or equal to $ Q$. 
But, $ m\leq Q ^{k}$, so by a well known logarithmic type estimate for the divisor function, 
we have \eqref{e:Csup}. 

\smallskip 

The third property is harder. It is due to Bourgain \cite{MR1209299}.
There are proofs in   \cite{181012344}*{Lemma 2.13} and  \cite{180906468}*{Lemma 5}. 
\end{proof}

\subsubsection*{A Tertiary Decomposition}
The preparations are finished.  It remains to prove \eqref{e:M1Q}, and this argument is indicated in Figure~\ref{f:teriary}. 
There are three cases, namely 
\begin{enumerate} \setlength\itemsep{.75em}
\item   $ Q < N ^{1/2}$. 

\item  $ N Q ^{k_1-1} < \tau $, where $ k_1 = k_1 ( \bar p , \bar q)$. 

\item  $ N ^{1/2} \leq Q$ and $ \tau \leq   N Q ^{k_1-1}  $, implying $ N < \tau < Q ^{k_2}$, where $ k_2 = k_2 (\bar p, \bar q)$. 
\end{enumerate}
We treat these cases in order, with the core case being the last one.  

\smallskip 

The first, and easiest case, concerns $ Q < N ^{1 /2}$. Using the trivial bound $\lvert   \mathsf c_q (n)\rvert \leq  q $, 
and using \eqref{e:Ram} we have 
\begin{equation*}
\lvert  M _{1,Q} f \rvert < N \cdot P _{Q, \tau } f   .  
\end{equation*}
It then follows from the continuous sparse bound \eqref{e:R} that we have 
\begin{equation} \label{e:1M1Q}
 \lvert  E\rvert ^{-1} \langle M _{1,Q} f ,g  \rangle \lesssim  N \langle f  \rangle _{E} ^{1/ \bar p} \langle g \rangle ^{1/\bar q}.   
\end{equation}
This is as required in \eqref{e:M1Q}. 

\smallskip

\begin{figure} 
\begin{tikzpicture}[sibling distance=12em, level distance=5.5em, 
    edge from parent/.style = {draw, -latex, font=\scriptsize}]
  \node [treenode] {$\displaystyle  \sum_{Q\leq q \leq  2Q} \sum  _{a \in \mathbb Z _q ^{\times }}  e_q (-a \lambda ^2 )   C ^{a/q,1} _{\tau } f   $}
    child  { node [treenode]{$  Q < N ^{1 /2}$} 
        edge from parent node [left] {\eqref{e:1M1Q} \ }} 
    child { node [treenode]{ $ N ^{1 /2} < Q$ \& $   N  < \tau  < Q ^{k_2}$}
      child { node [treenode]{ $  P _{\tau ,Q} ^0$}  
       edge from parent node [left] { \eqref{e:PP00} \ \ }
       edge from parent node [right] {absorb}} 
        child { node [treenode]{$   P _{\tau ,Q} ^1$} 
         edge from parent node [right] {\eqref{e:PP11} }} 
         child { node [treenode]{$   P _{\tau ,Q} ^2$} 
          edge from parent node [right] {\ \  \eqref{e:PP22} } } }
        child  { node [treenode]{$  \tau >  N Q ^{k_1-1} $} 
     edge from parent node [right] {\ \eqref{e:zM12} }} ; 
\end{tikzpicture}
\caption{The flow of the proof of  \eqref{e:M1Q}.  The integers $k_1$ and $k_2$ are large, and a function of $\epsilon$, $\bar p$ and $\bar q$. 
The first level of the decomposition is motivated by the estimates for Ramanujan sums in Lemma~\ref{l:Ram}.  The second level of the diagram is associated with the  Ramanujan estimate \eqref{e:Csup}.  It requires  a further decomposition of the kernel $ P _{\tau ,Q}$ in \eqref{e:Ram}. 
}  
\label{f:teriary}
\end{figure}
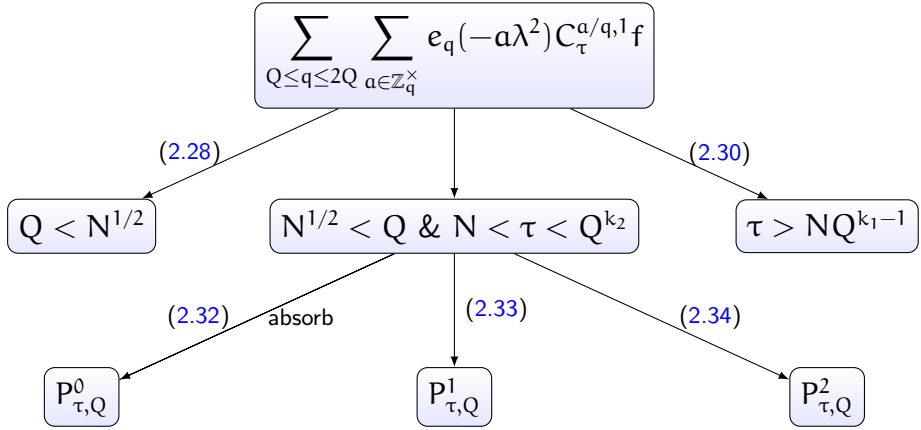

The second case we restrict to the case that  $ \tau >  N Q ^{k_1-1}$, 
for a sufficiently large integer $ k_1$ that is a function of $ (\bar p, \bar q)$. 
This case does not have an absorbing term.    

Dominate, using  H\"older's inequality with $ \ell ^{k_1}$---$\ell ^{k_1'}$ duality, 
\begin{align} 
\lvert  M_{1,Q} f (n)\rvert 
& \lesssim  \bigl[  P_{Q, \tau  } \ast f (n) \bigr] ^{1/k_1'} 
\Bigl[  \sum_{x\in \mathbb Z ^{d}}   \lvert  C _{Q} (\lvert  x\rvert ^2 - \lambda ^2  )\rvert ^{k_1} P_{Q, \tau  } (x)\ \Bigr] ^{1/k_1} 
\\ \label{e:MM} 
& \lesssim Q ^{1+ \epsilon /2}    P_{Q, \tau  } \ast f (n)  ^{1/k_1'} . 
\end{align}
Recall that $ f$ is an indicator function.  
Notice that we are using a bound on the Ramanujan sums that follows from \eqref{e:Ram<}. 
Recall that $ P _{Q, \tau }$ is an average over an annulus around the sphere of radius $ \lambda $, 
of width $ \tau Q/N $.  In particular, the width is greater that $ Q ^{k_1}$, by assumption that 
$ \tau > N Q ^{k_1-1}$.

But, then, we are free to conclude our statement, since $ (1/\bar p, 1/\bar q)$ are in the interior of $ \mathbf R_d$,  
we have, for $ k_1$ sufficiently large, so that $ k'_1$ is sufficiently close to $ 1$,   
as required in \eqref{e:M1Q}, 
\begin{align}
\lvert  E\rvert ^{-1}  \langle  \lvert  M_1 f \rvert  , g\rangle 
& \lesssim Q ^{1+ \epsilon } \lvert  E\rvert ^{-1} \bigl\langle 
[P_{Q, \tau  } \ast f ] ^{1/k_1'},g 
\bigr\rangle  
\\
& \lesssim  N 
\bigl[ \lvert  E\rvert ^{-1} \bigl\langle 
P_{Q, \tau  } \ast f , g 
\bigr\rangle  \bigr] ^{1/k_1'} 
\\ & \lesssim  \langle  f \rangle _{ E} ^{ \frac 1 { \bar p k_1'}} \langle g \rangle _{E} ^{\frac 1 {  \bar q k_1'}}.  
\label{e:zM12} 
\end{align}
Here, we use \eqref{e:MM} and then the real variable inequality  \eqref{e:R}, which we can do if $ k_1$ is sufficiently large, so that $ (k_1'\bar p, k_1'\bar q) \in \mathbf R _d$.  
This is our second application of \eqref{e:R}.
 
\smallskip

We  turn to third case of $ N < \tau  < Q ^{k_2}$.  
A final, fourth decomposition of $ P _{\tau ,Q}$ is needed, and the absorption argument appears. 
Let  $ \mathbb S _{\lambda } = \{n \in \mathbb Z ^{d} \;:\; \lvert  n\rvert = \lambda  \}$ be the 
integer sphere of radius $ \lambda $, and set 
\begin{align}\label{e:P123}
P _{\tau ,Q }  &= \sum_{j=0} ^{2} P _{\tau ,Q} ^{j}, 
\\   \label{e:P0}
\textup{where} \qquad P _{\tau , Q} ^{0} (n) &= P _{\tau ,Q } (n) \mathbf 1_{ \mathbb S _{\tau  } } (n) , 
\\  \label{e:P1}
\textup{and} \qquad  P _{\tau , Q} ^{1} (n) & =  P _{\tau ,Q } (n) \mathbf 1_{ 0< \textup{dist} (n, \mathbb S _{\tau } ) < \tau Q^{ 1+ \epsilon }/N } ,  
\end{align}
and $P _{\tau , Q} ^{2} $ is then defined.  The term $ P _{\tau , Q} ^{0} $ is a multiple of the average over 
the integer sphere of radius $ \lambda $, and  the term $ P _{\tau , Q} ^{1}$ is just that part of $ P _{\tau , Q}$ that is close to, but not equal to, the sphere of radius $ \mathbb S _{\tau }$.

Let us detail the absorbing term.  Note that $ \mathsf C _{Q} (0) \simeq  Q ^2 $. 
Indeed, we have to single out this case as there is no cancellation in the Ramanujan sum, 
when the argument is zero.  
Using the definition \eqref{e:P0},  we have 
\begin{align}
\lvert  P _{\tau , Q} ^{0} (n) \cdot \mathsf C _{Q} ( \lvert  n\rvert ^2 - \tau ^2  ) \rvert 
& \lesssim    
Q ^2      \frac{N} { Q \tau ^ d } \mathbf 1_{ \mathbb S _{\tau }} (n) 
\\&   \label{e:PP00}
\lesssim   \frac{N  Q } {\tau ^2} \cdot  \tau ^{2-d}  \mathbf 1_{ \mathbb S _{\tau }} (n)  
\lesssim  N ^{-2 \epsilon}  \cdot  \tau ^{2-d}  \mathbf 1_{ \mathbb S _{\tau }} (n) . 
\end{align}
This is as required in \eqref{e:absorb} and \eqref{e:M1Q}.

The inequality \eqref{e:Csup} on the Ramanujan sums applies in the analysis of $ P _{\tau , Q} ^1$, 
due to our assumptions $  Q  < \tau  < Q ^{k_2}$.  
It shows that 
\begin{equation*}
\lvert  P_{Q, \tau } ^{1}  (n) \cdot \mathsf C _{Q} ( \lvert  n\rvert ^2 - \tau ^2  ) \rvert 
\lesssim Q ^{1+ \epsilon }\frac{ N } {\tau ^{d} Q}  
\mathbf 1_{ 0< \textup{dist} (n, \mathbb S _{\tau } ) < \tau Q^{ 1+ \epsilon }/N }
\end{equation*}
Keeping normalizations in mind, 
it  follows from our real variable sparse inequality \eqref{e:R} that 
\begin{equation}  \label{e:PP11}
\lvert  E\rvert ^{-1}  
\langle P_{Q, \tau } ^{1} \ast f , g \rangle \lesssim Q ^{1+ 2\epsilon }  \langle f \rangle_E ^{1/\bar p} 
\langle g \rangle _{E} ^{1/\bar q} .  
\end{equation}
This is as required in \eqref{e:M1Q}.  

The last term is $ P_{Q, \tau } ^2 $.  As noted,  $ \lvert  \mathsf C _{q} (m)\rvert \leq Q ^2  $. 
The condition $ \tau < Q ^{k_2}$, and simple Schwartz tail considerations then show that 
\begin{equation} \label{e:PP22}
\lvert  P_{Q, \tau } ^2  (n)  \cdot \mathsf C _{Q} ( \lvert  n\rvert ^2 - \tau ^2  )\rvert \lesssim    \frac{Q} {\tau ^{d} }  [ 1+ \lvert  n\rvert/ \tau ] ^{-2d} .  
\end{equation}
Since $ \tau $ is an admissible stopping time, it follows that 
\begin{equation*}
\lvert   P_{Q, \tau } ^{1} \ast f  \rvert \lesssim \langle f \rangle _{E}.   
\end{equation*}
This completes the proof of \eqref{e:M1Q}.

\section{The Endpoint Sparse Bound } 

We need this definition.  Given cube $ E$, we say that collection $ \mathcal Q _E$ of subcubes 
$ Q\subset E$ are \emph{pre-sparse} if the cubes $ \{ \frac{1}3 Q \;:\; Q\in \mathcal Q_E\}$ are 
pairwise disjoint.  Associated to to a pre-sparse collection $ \mathcal Q_E$  
are a family of stopping times. We say that $ \tau $ is $ \mathcal   Q_E $ admissible  (or just admissible) if 
\begin{equation} \label{e:tau}
 \ell (E) \geq \tau _{\mathcal{Q}_E} (x) = \tau (x) \geq  \max \{  1 ,   \ell (Q) \mathbf 1_{\frac{1}3 Q} \;:\; Q\in \mathcal Q_E\}, \qquad x\in E. 
\end{equation}
The relevant Lemma is this. 

\begin{lemma}\label{l:main} For $ f = \mathbf 1_{F}$ supported on cube $ 3E$, 
there is a pre-sparse collection $ \mathcal Q _{E}$ so that 
for all  $ \mathcal Q_E $ admissible $ \tau = \tau (x)$,  and all  $ g= \mathbf 1_{G} $ 
supported on $ E$, we have 
\begin{equation} \label{e:main}
 \langle  A _{\tau  } f ,  g \rangle 
\lesssim \langle f \rangle _{3E, \frac{d} {d-2} } \langle g \rangle _{E , \frac{d} {d-2}} \lvert  E\rvert.  
\end{equation}
\end{lemma}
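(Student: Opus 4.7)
The plan is to mirror the decomposition and argument of Section 2, now targeting the endpoint vertex $Z_1 = (\tfrac{d-2}{d}, \tfrac{d-2}{d})$ of $\mathbf{Z}_d$. Since $Z_1$ lies on the segment from $(\tfrac12,\tfrac12)$ to the vertex $R_1 = (\tfrac{d-1}{d}, \tfrac{d-1}{d})$ of $\mathbf{R}_d$ with the convex combination weights of \eqref{e:ZZ}, the argument is morally an interpolation between a high-frequency $\ell^2$ estimate and a restricted weak-type endpoint sparse bound for the continuous spherical maximal function at $R_1$, realized through the Hardy--Littlewood circle method decomposition inherited from Section 2.

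First I would construct the pre-sparse collection $\mathcal{Q}_E$ by taking the maximal dyadic subcubes $Q \subset E$ with $\langle f \rangle_{3Q} > C \langle f \rangle_{3E}$ for a suitably large constant $C$; pairwise disjointness of $\{\tfrac13 Q : Q \in \mathcal{Q}_E\}$ follows at once from the maximality of dyadic cubes, and any admissible $\tau$ automatically satisfies $\tau(x) \geq \ell(Q)$ on $\tfrac13 Q$, which plays the same role as the admissible stopping times of Lemma \ref{l:recurse}. With $\tau$ fixed I would decompose $A_\tau f \leq M_1 + M_2 + \tfrac12 A_\tau f$ along the trees of Figures \ref{f:Fulltree} and \ref{f:teriary}. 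The $\ell^2$ terms $M_2$ (arising from the error $E_\lambda$, large denominators via \eqref{e:MSWfactor}, and the small-denominator $\ell^2$ part \eqref{e:Complete}) are bounded by Cauchy--Schwarz against their operator norms, giving
\begin{equation*}
\langle M_2 f, g \rangle \lesssim N^{(4-d)/2 + d \epsilon} (\alpha \beta)^{1/2} |E|,
\end{equation*}
with $\alpha = |F \cap 3E|/|3E|$ and $\beta = |G \cap E|/|E|$. The sparse-type terms $M_1$ (small $\tau$ and the small-$Q$ cases after the tertiary decomposition of \eqref{e:P123}) are bounded via the continuous inequality \eqref{e:R} at (or arbitrarily close to) the vertex $R_1$ applied to the indicator-function envelopes $\phi,\gamma$, producing
\begin{equation*}
\langle M_1 f, g \rangle \lesssim N^{1+\epsilon} (\alpha \beta)^{(d-1)/d} |E|.
\end{equation*}
The balancing choice $N \simeq (\alpha\beta)^{-1/d}$ then makes both contributions of order $(\alpha\beta)^{(d-2)/d} |E|$, which is precisely \eqref{e:main}. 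The absorption step of \eqref{e:absorb} coming from the $P^0_{\tau,Q}$ piece of the tertiary decomposition carries through unchanged.

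The main obstacle will be attaining the continuous endpoint at $R_1$: Lemma \ref{l:R} is stated only strictly in the interior of $\mathbf{R}_d$, so to avoid $\epsilon$-losses in the final exponent one needs a restricted weak-type variant of \eqref{e:R} at the vertex $R_1$, which for indicator functions should be available from the Schlag--Sogge $L^p$-improving bounds \cite{MR1432805} combined with a Calder\'on--Zygmund-type stopping argument. In addition, the $Q^\epsilon$ and $N^\epsilon$ losses carried by the Ramanujan-sum estimates \eqref{e:Csup} and \eqref{e:Ram<} and by the summation over $Q$ must stay sub-polynomial against the balance; this is achieved by working at an interior $\bar p$ tending to $R_1$ and using Bourgain's sharper $L^k$-averaged Ramanujan bound \eqref{e:Ram<} with $k$ large in place of the cruder \eqref{e:Csup} whenever the averaged bound is needed to close the optimization, exactly as in the interior argument of Section 2.
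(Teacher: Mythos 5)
Your plan reuses the Section~2 machinery---the continuous sparse bound \eqref{e:R} near the vertex $R_1$, the tertiary decomposition in $Q$, and the Ramanujan-sum estimates \eqref{e:Csup} and \eqref{e:Ram<}---and tries to push it to the endpoint by balancing $M_1 \lesssim N^{1+\epsilon}(\alpha\beta)^{(d-1)/d}$ against $M_2 \lesssim N^{(4-d)/2+d\epsilon}(\alpha\beta)^{1/2}$. This does not close. The $\epsilon$-losses in that route are not artifacts of writing the argument loosely: the bounds $\lvert \mathsf C_Q(m)\rvert \lesssim Q^{1+\epsilon}$ (from the divisor function) and the averaged bound \eqref{e:Ram<} are genuinely lossy, and \eqref{e:R} is only available strictly inside $\mathbf R_d$. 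Carrying the optimization through with $\epsilon>0$ fixed yields an exponent strictly smaller than $(d-2)/d$, i.e.\ you recover the interior range of Theorem~\ref{t:Z}(1), not \eqref{e:main}. Your own caveat about ``attaining the continuous endpoint at $R_1$'' flags part of the problem, but even granting that endpoint the Ramanujan-sum $Q^\epsilon$ factors remain and spoil the balance.

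The paper's Section~3 proof is built precisely to avoid this. The key observation is that for the endpoint one should \emph{not} compare to the continuous spherical average at all; one compares to ball averages, which costs an extra factor of $N$ but removes every $\epsilon$. Concretely, the $M_1$ terms are bounded via $L^1$--$L^\infty$ duality: for $\tau \leq 100N$ one has pointwise $A_\tau f \lesssim \tau^2 B_\tau * f \lesssim N^2 \langle f\rangle_{3E,1}$, and for the small-denominator major-arc piece $C^{a/q,1}_\tau$ one uses the kernel bound \eqref{e:CC}, $\lvert C^{a/q,1}_\lambda(n)\rvert \lesssim d\sigma_\lambda * \psi_{\lambda q/N}(n)$, summed over $a,q$ and controlled by admissibility of $\tau$, giving \eqref{e:M12}, again an $N^2\langle f\rangle_{E,1}$ bound with no $\epsilon$. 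No Ramanujan sums enter at all, and no tertiary decomposition in $Q$ is needed. The $M_2$ bound $N^{-(d-4)/2}(\alpha\beta)^{1/2}$ is the same as yours. Balancing $N^2\alpha\beta$ against $N^{-(d-4)/2}(\alpha\beta)^{1/2}$ with $N \simeq (\alpha\beta)^{-1/d}$ gives exactly $(\alpha\beta)^{(d-2)/d}$, which is \eqref{e:main}. The lesson: at the endpoint the cruder $N^2$ ball-average comparison with clean $\langle \cdot \rangle_1$ averages outperforms the sharper $N^{1+\epsilon}$ spherical comparison with $\langle \cdot \rangle_{\bar p}$ averages, because the exponents on $\alpha,\beta$ and on $N$ trade off exactly and the former introduces no $\epsilon$.

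One smaller point: your construction of $\mathcal Q_E$ via maximal cubes with $\langle f\rangle_{3Q} > C\langle f\rangle_{3E}$ is fine, and matches the paper; and the absorption term $\tfrac12 A_\tau f$ from $P^0_{\tau,Q}$ is not needed in the endpoint proof precisely because the tertiary decomposition is not used there.
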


  The Lemma follows from this: For integers $ N >1$, there is a decomposition 
\begin{align}\label{e:goal}
  A _{\tau  } f  
 & \leq M_1+ M_2, 
 \\   \label{e:goal1}
 \langle M_1  , g\rangle &\lesssim N ^2  \langle f \rangle _{3E,1} \langle g \rangle _{E,1} \lvert  E\rvert , 
 \\ \label{e:goal2}
\textup{and} \qquad  \langle M_2  , g\rangle &\lesssim N ^{- \frac{d-4}2} \langle f \rangle _{3E,2 } \langle g \rangle _{E,2} \lvert  E\rvert .  
 \end{align}
Recalling that $ f = \mathbf 1_{F}$ and $ g= \mathbf 1_{G}$, the right sides above are comparable for 
$ N \simeq \bigl[\langle f \rangle _{3E,1} \langle g \rangle _{E,1} \bigr]^{- \frac{1}d}$, and this proves  \eqref{e:main}.  
\bigskip 

\tikzset{
 treenode/.style = {shape=rectangle, rounded corners,
    draw, align=center,
    top color=white, bottom color=blue!10}}

\begin{figure} 
\begin{tikzpicture}[sibling distance=13em, level distance=5.5em, 
    edge from parent/.style = {draw, -latex, font=\scriptsize}]
  \node [treenode] {$ \displaystyle  A _{\tau  } f $}
    child  { node [treenode]{$ \displaystyle  \mathbf 1_{ \tau (x) \leq  N}  A _{\tau   } f  $} 
     edge from parent node [left] {$M_1$ \ }} 
    child { node [treenode]{$  \displaystyle A _{\lambda } = C _{\lambda } + E_{\lambda }$}
      child { node [treenode]{ $\displaystyle C _{\lambda} = \sum _{1\leq q \leq \lambda } \sum _{a \in \mathbb Z _q ^{\times }} C^{a/q}_\lambda $}
        child { node [treenode]{ $\displaystyle   \sum_{ N/100\leq q \leq \lambda } \sum _{a \in \mathbb Z _q ^{\times }}   C ^{a/q} _{\tau } f  $}  
 		edge from parent node [left] {$M_2$ \ \ }}   
        child { node [treenode] { $\displaystyle  C ^{a/q} _{\lambda } =  C ^{a/q,1} _{\lambda } +  C ^{a/q,2} _{\lambda } $} 
        child { node [treenode] {$\displaystyle  \sum_{1 \leq q \leq  N/100} \sum  _{a \in \mathbb Z _q ^{\times }}   C ^{a/q,1} _{\tau } f   $}
 		edge from parent node [left] {$M_1$ \ }} 
 		child { node [treenode] {$\displaystyle    \sum_{1\leq q \leq N/100} \sum _{a\in \mathbb Z_q ^{\times}} 
 C ^{a/q,2} _{\tau } f  $}
   edge from parent node [right] {\ $M_2$}
        }}  }
      child { node [treenode] {$\displaystyle      E_{\tau } f   $}
       edge from parent node [right] {\ $M_2$}}   };
\end{tikzpicture}
\caption{The flow of the proof of  \eqref{e:goal}---\eqref{e:goal2}.  The notation is similar to Figure~\ref{f:Fulltree}.  
 }  
\label{f:short}
\end{figure}
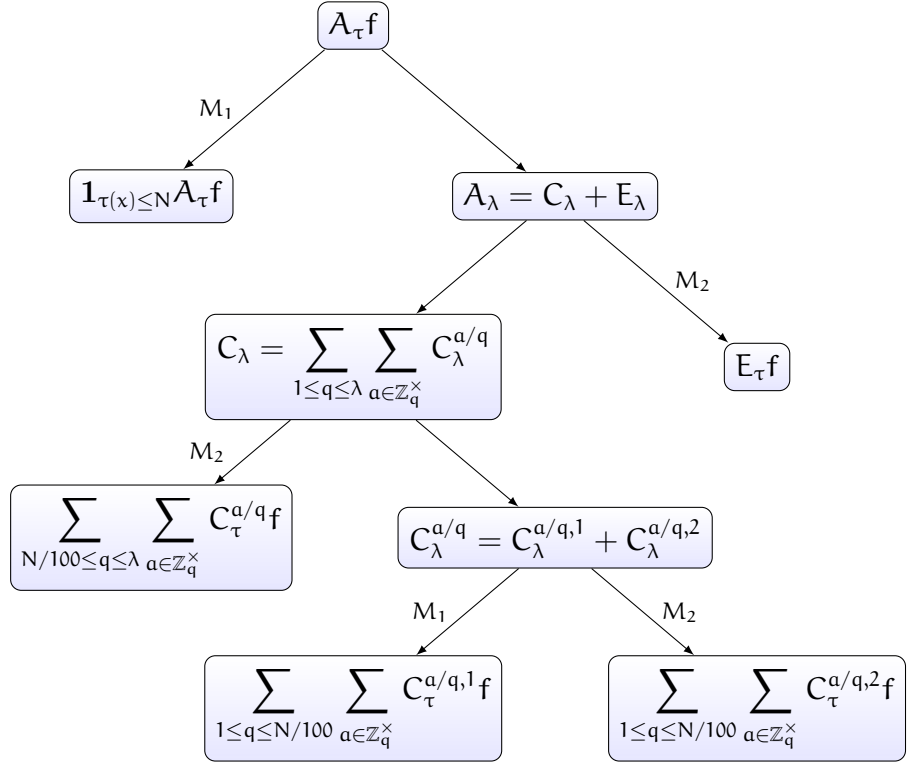

It remains to prove \eqref{e:goal}---\eqref{e:goal2}.   Our proof will be much shorter, because we do not need to compare to the very rough continuous spherical averages, but to averages over balls.  (In particular, we will not need any subtle facts about Ramanujan sums.) 
A guide to the argument is in Figure~\ref{f:short}. 
The decomposition has several elements. The first begins with the trivial bound 
$
A _{\lambda } f (x) \lesssim \lambda ^2 B _{\lambda  } \ast f (x) 
$, where $ B _{\lambda }$ is the average of over a ball of radius $ \lambda $.  
Our first contribution to $ M_1 $ is,  
\begin{equation}\label{e:M11}
M _{1,1} =    \mathbf 1_{ \tau (x) \leq  100 N}  A _{\tau   } f , 
\end{equation}
which is pointwise bounded by $ C N ^{2}\langle f   \rangle _{3E,1}$, by choice of $ \mathcal Q_E$.  Thus \eqref{e:goal1} holds for this term.   Below, we are free to assume that $ \tau \geq  100N$.  

Recall the decomposition of $ A _{\lambda } f $, beginning with \eqref{e:Afull}. 
Our first contribution to $ M_2$ is 
$
M _{2,1}  =   \lvert  E_{\tau } f \rvert 
$. 
This  satisfies \eqref{e:goal2} by \eqref{e:zM21}.  

It remains to bound 
$
    C _{\tau }  f    
$ as defined in \eqref{e:Cfull}.  This  requires further contributions to $ M_1 $ and $ M_2$.  
Apply \eqref{e:MSWfactor} to see that this term obeys \eqref{e:goal2}. 
\begin{equation} \label{e:M22}
M_ {2,2} = 
 \sum_{ N/100 \leq q \leq \lambda } \sum _{a \in \mathbb Z _q ^{\times }}  \lvert   C ^{a/q} _{\tau } f \rvert  . 
\end{equation}

The remaining terms are 
\begin{equation} \label{e:smallq}
 \sum_{1 \leq q \leq  N/100} \sum _{a \in \mathbb Z _q ^{\times }}  e_q (- \lambda ^2 a) C ^{a/q} _{\tau  } f  . 
\end{equation}
Control will consist of an additional contribution to $ M_1$ and $ M_2$.

\bigskip 
Write   $ C _{\lambda } ^{a/q} = C_{\lambda } ^{a/q,1} +  C _{\lambda } ^{a/q,2} $, where a different cut-off in frequency is inserted.  
\begin{equation}\label{e:aq1}
 \begin{split}
 \widehat {C _{\lambda } ^{a/q,1}}  (\xi ) 
 &=  \sum_{\ell \in \mathbb Z ^{ d}} G (a/q, \ell ) \widetilde\psi_q (\xi - \ell /q)  \widetilde\psi _{\lambda q/N} (\xi - \ell /q) 
\widetilde{d \sigma _{\lambda }} (\xi - \ell /q)
\\ 
&=\sum_{\ell \in \mathbb Z ^{ d}} G (a/q, \ell )   \widetilde\psi _{\lambda q/N} (\xi - \ell /q) 
\widetilde{d \sigma _{\lambda }} (\xi - \ell /q) .
 \end{split}
\end{equation} 
This follows from the definition of $ \psi $ in \eqref{e:caq}.  
This is slightly different from \eqref{e:exC}, in particular, the term below satisfies \eqref{e:goal2}, 
just as in the previous section.  
\begin{equation*}
M _{2,3} = \Bigl\lvert  
 \sum_{1 \leq q \leq  N/100} \sum _{a \in \mathbb Z _q ^{\times }}  e_q (- \lambda ^2 a) C ^{a/q,2} _{\tau  } f
\Bigr\rvert. 
\end{equation*}

We claim that 
\begin{equation}\label{e:M12}
 \Bigl\lVert  
 \sum_{1 \leq q \leq  N/100} \sum _{a \in \mathbb Z _q ^{\times }}  e_q (- \lambda ^2 a) C ^{a/q,1} _{\tau } f  
 \Bigr\rVert _{\infty } \lesssim N ^2 \langle f \rangle _{E,1}. 
\end{equation}
That is, the term on the left is our second and final contribution to $ M_1$, as in \eqref{e:goal1}.

Recalling \eqref{e:mzeta} and \eqref{e:Ram}, we have 
\begin{align} \label{e:CC}
\bigl\lvert  {C_{\lambda } ^{a/q,1}} (n)\bigr\rvert &   
\lesssim  d \sigma _{\lambda  } \ast   {\psi _{\lambda q/N}}   (n) 
\end{align}
The convolution is with $ d \sigma _{\lambda }$ and $  {\psi _{\lambda q/N}}   $, which is a 
bump function of integral one, supported on scale $ N/ \lambda q$, which is much smaller than $ \lambda $. 
As a consequence, we have 
\begin{equation*}
\bigl\lvert  {C_{\lambda } ^{a/q,1}} (n)\bigr\rvert  
\lesssim \frac{N}q  \cdot   \lambda ^{-d}\Bigl[ 1+ \lvert  n\rvert/ \lambda   \Bigr] ^{- 3d}.  
\end{equation*}
This is summed over $ 1\leq  a < q \leq N /100$.  And, one appeals to the admissibility of the stopping time $ \tau $ to complete the proof of \eqref{e:M12}. 

\section{Interpolation of Sparse Bounds} 

We show that if a sublinear operator satisfies an open range of sparse bounds for indicator sets, then 
they improve to sparse bounds for functions.  

\begin{lemma}\label{l:interpolate} Suppose that a sub-linear operator $ T$ satisfies the bound below 
for $ 1< p, q < \infty  $.  For a fixed function $ f$ and  all $ \lvert  g\rvert  \leq \mathbf 1_{G}$, 
there is a sparse collection $ \mathcal S$ so that 
\begin{equation*}
\lvert  \langle T f, g \rangle\rvert  \lesssim \Lambda _{\mathcal S, p, q} (f, \mathbf 1_{G})
\end{equation*}
Then,  
\begin{equation*}
\lvert  \langle T f, g \rangle\rvert  \lesssim   \sup _{\mathcal S}\Lambda _{\mathcal S, p, q} (f, g), 
\qquad q < r < \infty .  
\end{equation*}
\end{lemma}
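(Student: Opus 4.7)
The plan is a layer-cake decomposition in $g$, followed by a per-cube summation that trades the $L^{q}$-averages of indicators appearing in the hypothesis for $L^{r}$-averages of $g$ in the conclusion, at the cost of passing from $q$ to any larger exponent $r$.

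Normalize so that $\lVert g\rVert_{\infty}=1$ and set $G_k=\{2^{-k-1}<|g|\leq 2^{-k}\}$, $g_k=g\,\mathbf 1_{G_k}$ for $k\geq 0$, so that $g=\sum_k g_k$ and $|g_k|\leq 2^{-k}\mathbf 1_{G_k}$.  For each $k$, the hypothesis applied to $2^{k}g_k$ (of modulus at most $\mathbf 1_{G_k}$) produces a sparse collection $\mathcal S_k$ with
\[
|\langle Tf,g_k\rangle|\;\lesssim\; 2^{-k}\sum_{S\in\mathcal S_k}|S|\,\langle f\rangle_{S,p}\,\langle \mathbf 1_{G_k}\rangle_{S,q}.
\]

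The computational heart is the pointwise layer-sum estimate: for every cube $S$ and every $r>q$,
\[
\sum_{k\geq 0} 2^{-k}\,\langle \mathbf 1_{G_k}\rangle_{S,q}\;\lesssim\; \langle g\rangle_{S,r}.
\]
One splits the sum at the critical index $k_0$ with $2^{-k_0}\sim\langle g\rangle_{S,r}$: for $k\geq k_0$ the trivial bound $\langle \mathbf 1_{G_k}\rangle_{S,q}\leq 1$ yields a geometric series in $2^{-k}$; for $k<k_0$ one uses Chebyshev, $|S\cap G_k|\leq 2^{(k+1)r}\int_S|g|^r$, takes $1/q$-th powers, and sums geometrically, exploiting for the first time the strict inequality $r>q$.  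Both regimes contribute $\lesssim \langle g\rangle_{S,r}$.

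Summing the per-layer bounds and interchanging the order of summation,
\[
|\langle Tf,g\rangle|\;\leq\;\sum_k|\langle Tf,g_k\rangle|\;\lesssim\;\sum_{S\in\mathcal A}|S|\,\langle f\rangle_{S,p}\,\langle g\rangle_{S,r},\qquad \mathcal A:=\bigcup_{k\geq 0}\mathcal S_k.
\]
The chief obstacle is the final step: the union $\mathcal A$ is only a countable union of sparse collections and need not itself be sparse, since the Carleson packing constant over $k$ may blow up.  The standard resolution is to extract from $\mathcal A$ a genuinely sparse $\mathcal S^\ast$ that dominates the weighted sum up to a bounded factor --- for instance, by running a Calder\'on--Zygmund stopping-time argument with respect to the discrete measure $S\mapsto |S|\,\langle f\rangle_{S,p}\,\langle g\rangle_{S,r}$ and retaining only the principal cubes --- which yields $|\langle Tf,g\rangle|\lesssim\Lambda_{\mathcal S^\ast,p,r}(f,g)\leq\sup_{\mathcal S}\Lambda_{\mathcal S,p,r}(f,g)$, as required.
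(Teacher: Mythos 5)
Your decomposition of $g$ into dyadic levels and your pointwise layer-sum estimate are both correct; the latter is precisely the statement $\lVert g\rVert_{L^{q,1}(S,\,dx/\lvert S\rvert)}\lesssim\langle g\rangle_{S,r}$ for $r>q$, which is exactly the Lorentz-space observation the paper also relies on. The difficulty is the order in which you apply this estimate, and your final step has a genuine gap that cannot be repaired as written.

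After interchanging the order of summation you arrive at $\sum_{S\in\mathcal A}\lvert S\rvert\,\langle f\rangle_{S,p}\,\langle g\rangle_{S,r}$ with $\mathcal A=\bigcup_k\mathcal S_k$, and you correctly observe that $\mathcal A$ need not be Carleson. But the proposed fix --- extract a sparse $\mathcal S^\ast\subset\mathcal A$ ``dominating the weighted sum'' --- is not available: a countable union of sparse families can have a sum of the form $\sum_{S\in\mathcal A}\lvert S\rvert\,a_S b_S$ that is strictly larger (indeed unboundedly so) than every sparse form for the same pair of weights. For a concrete obstruction, take $\mathcal S_k$ to be all dyadic cubes of side $2^{-k}$ inside a fixed cube; each $\mathcal S_k$ is sparse, yet $\sum_{S\in\mathcal A}\lvert S\rvert=\infty$ while $\sup_{\mathcal S}\Lambda_{\mathcal S,p,r}(\mathbf 1,\mathbf 1)<\infty$. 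Nothing in your derivation rules this configuration out, and ``retaining only the principal cubes'' does not bound the discarded part. The absorbing step in a Calder\'on--Zygmund stopping-time argument requires control of the non-principal sum, which is exactly what is missing here.

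The repair is to delay the $k$-summation. One should first introduce a stopping family $\mathcal F$ adapted to the $\langle f\rangle_{Q,p}$ averages (so that $\mathcal F$ is itself sparse and $\langle f\rangle_{Q,p}\lesssim\langle f\rangle_{Q^a,p}$ for the minimal $Q^a\in\mathcal F$ containing $Q$). Then, for each fixed stopping cube $P\in\mathcal F$ and each \emph{fixed} $k$, one exploits the Carleson property of the single collection $\mathcal S_k$ restricted to $\{Q:Q^a=P\}$: H\"older's inequality with exponents $\alpha,\alpha'$ (with $q<\alpha<r$) together with the Carleson embedding inequality give
\begin{equation*}
\sum_{\substack{Q\in\mathcal S_k\\ Q^a=P}}\lvert Q\rvert\,\langle\mathbf 1_{G_k}\rangle_{Q,q}\;\lesssim\;\lvert P\rvert\,\langle\mathbf 1_{G_k}\rangle_{P}^{1/\alpha}.
\end{equation*}
Only \emph{then} does one sum over $k$, and your layer-sum estimate (with $q$ replaced by $\alpha$) yields $\lesssim\lvert P\rvert\,\langle g\rangle_{P,r}$. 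Summing over $P\in\mathcal F$ produces the sparse form. The paper carries out precisely this sequence; the essential point your argument misses is that the Carleson bound must be invoked for each $\mathcal S_k$ separately, inside each stopping cube, before the level sets of $g$ are recombined.
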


\begin{proof}
In this proof, we will work on $ \mathbb R ^{d}$, with the same proof working on $ \mathbb Z ^{d}$.  
We will also assume that  (1) $ T$ is a positive operator,  and (b) all cubes are dyadic. 
The general case is not much harder than  these considerations.  
Let $ f $, and let $ g$ be a bounded compactly supported function.  
We will show that there is a sparse collection $ \mathcal S$ so that 
\begin{equation}\label{e:genl}
\langle T f, g \rangle \lesssim \sum_{Q\in \mathcal S} \langle \mathbf 1_{F} \rangle _{Q} ^{1/p} 
\langle  g \rangle _{Q , q,1} \lvert  Q\rvert,  
\end{equation}
where $ \langle  g \rangle _{Q , q,1} = \lVert g \mathbf 1_{Q}\rVert _{L ^{q,1} (dx/ \lvert  Q\rvert )} $ 
is the Lorentz space with normalized measure.   Since $ \langle g \rangle _{Q, q,1} < \langle g \rangle _{Q, r}$, 
for $ 1 < r < q$, this completes the proof of the Lemma. 

The argument for \eqref{e:genl} is a level set argument. Thus, write  
$ g \leq \sum_{k\in \mathbb Z } 2 ^{k} \mathbf 1_{G_k}$, for disjoint sets $ G_k$.  
Apply the assumed sparse bound for indicators for each pair of sets $ (F, G_k)$. We get 
a sequence of sparse sets $ \mathcal S_k$ so that 
\begin{equation} \label{e:gen2}
\langle T f, g \rangle \lesssim \sum_{k \in \mathbb Z } \Lambda _{\mathcal S_k, p, q} (\mathbf 1_{F}, \mathbf 1_{G_k})  . 
\end{equation}
Let $ \mathcal F$ be a sequence of stopping cubes for the averages $ \langle f \rangle _{Q,p}$. 
That is, we choose $ \mathcal F$ so that for any dyadic cube $ Q$, there is a dyadic cube in $ \mathcal F$ 
that contains $ Q$.  And setting $ Q ^{a}$ to be the minimal such cube in $ \mathcal F$, we have $ \langle f \rangle_{Q,p} \lesssim 
\langle f \rangle _{Q ^{a},p}$. 
The sum in \eqref{e:gen2} is organized according to $ \mathcal F$.  
Below, we take $ q< \alpha < \infty $, very close to $ q$.  For each $ P \in \mathcal F$ we have 
\begin{align} 
\Gamma (P) &= 
\sum_{k\in \mathbb Z } 2 ^{-k} \sum_{\substack{Q\in \mathcal S_k\\   Q ^{a} =P }} 
\langle \mathbf 1_{G_k} \rangle _{Q} ^{1/q} \lvert  Q\rvert  
\\
& \lesssim 
\sum_{k\in \mathbb Z } 2 ^{-k} 
 \lvert  P\rvert ^{1/\alpha '}   \Bigl[  \sum_{\substack{Q\in \mathcal S_k\\   Q ^{a} =P }} 
\langle \mathbf 1_{G_k} \rangle _{Q} ^{\alpha /q} \Bigr] ^{1/ \alpha } 
\\  \label{e:gen3}
& \lesssim \lvert  P\rvert   \sum_{k\in \mathbb Z } 2 ^{-k} \langle \mathbf 1_{G_k} \rangle  _{P} ^{1/ q} 
= \langle g \rangle _{L ^{q,1} (P)} \lvert  P\rvert .  
\end{align}
Here, we have used H\"older's inequality, sparseness of the collections $ \mathcal S_k$  and the Carleson embedding inequality.  
The last inequality is one way to define the $ L ^{q,1}$ norm.  

And, so we have 
\begin{equation*}
\eqref{e:gen2}  \lesssim \sum_{P\in \mathcal F} \langle f \rangle _{P, p} \Gamma (P). 
\end{equation*}
The bound in \eqref{e:gen3} then implies \eqref{e:genl}.  
\end{proof}

Concerning our Theorem~\ref{t:Z}. For the first assertion, we have proved a restricted weak type inequality 
for all $ (1/p, 1/q)$ in the interior of $ \mathbf Z_d$. 
We see that we can then replace the indicator functions in both coordinates by functions.  
That is, we have $ \langle A  \rangle _{p,q} < \infty $ for all $ (1/p, 1/q)$ in the interior of $ \mathbf Z_d$. 

Concerning the second assertion, we have the restricted weak type inequality at $ (\frac d{d-2},\frac d{d-2})$.  We see from the Theorem above that we have this consequence 
\begin{equation*}
\langle A f, g \rangle \lesssim 
\sup _{\mathcal S} \sum_{Q\in \mathcal S} \langle f \rangle _{Q, \frac d{d-2},1} \langle g \rangle _{Q,q} \lvert  Q\rvert , 
\qquad   q > \frac d{d-2}. 
\end{equation*}

\section{Counterexamples} \label{s:counter}

We can show this proposition, showing necessary conditions on $ (p,q)$ for the sparse bound to hold.  The gap between the sufficient conditions for 
a sparse bound and these necessary conditions is illustrated in Figure~\ref{f:nec}.

\begin{figure}[h]
\begin{tikzpicture}
\filldraw[color=black!10]  (0,3)  -- (2.7, 2.4) -- (2.7,0) -- (2.5,1) -- (0,3) ;
\draw [->]  (-.5, 0)  node [left] {$ \frac{d-2}d$}-- (5.5,0) node[below] {$ 1/p$}; 
\draw [->]  (0,-.5) -- (0,3.5) node[left] {$ 1/q$}; 
\draw (-.25 , 3 )  node [left] {$ 1$}  --  (0,3) --  (2.5,1) node[left] {$ {}_{ Z_2}$} -- (2.7, 0)  node [below] {$ Z_1$} ; 
\draw[dotted] (0,3)  -- (2.7, 2.4) -- (2.7,0); 
\draw[->] (5,1.5) node[below] {$ {}_{(\frac{d-2}d, \frac{d ^2 -2d+4} {d (d-1)})}$} -- (2.7,2.4); 
\draw[->] (5,3)   node[above] {$  \tfrac2{p} +\tfrac{ d-1}q = d$} -- (1.35,2.7) ; 
\end{tikzpicture}

\caption{The horizontal line is set at $ \frac{1}q =  \frac{d-2}d$, for reasons of clarity.  
Sparse bounds hold below the solid line from $ (0,1)$, to $ Z_2$ to $ Z_1= (\frac{d-2}d, \frac{d-2}d)$.  
(Recall that $ Z_2$ is defined in \eqref{e:Quad}.)
They cannot hold to the right of $ Z_1 $, nor above the dotted line. 
The gray area is unresolved.  
}
\label{f:nec}
\end{figure}
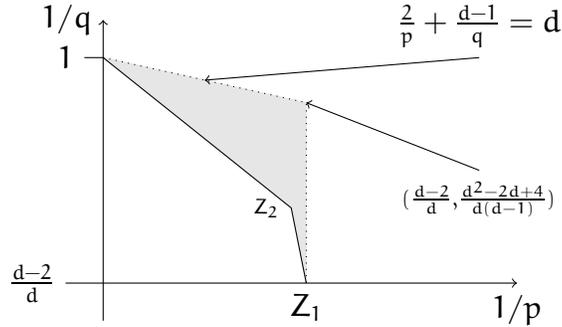

\begin{proposition}\label{p:nec} If the sparse bound $ \lVert A \rVert _{ p,q} < \infty $ holds, we have 
\begin{equation}\label{e:nec}
 \frac1{p_1} \leq \frac{d-2}d, \qquad \frac2{p} +\frac{ d-1}q \leq d.  
\end{equation}
\end{proposition}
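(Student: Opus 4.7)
The plan is to establish both necessary inequalities by constructing explicit indicator test pairs $(\mathbf 1_{F_R}, \mathbf 1_{G_R})$ indexed by a scale parameter $R\to\infty$ and showing that the ratio
\begin{equation*}
\frac{\langle A\mathbf 1_{F_R}, \mathbf 1_{G_R}\rangle}{\sup_{\mathcal S}\Lambda_{\mathcal S, p, q}(\mathbf 1_{F_R}, \mathbf 1_{G_R})}
\end{equation*}
diverges as $R\to\infty$ whenever the claimed scaling inequality is violated. In both cases the sparse-form supremum is comparable up to constants to the single-cube value $|Q|\langle\mathbf 1_{F_R}\rangle_{Q,p}\langle\mathbf 1_{G_R}\rangle_{Q,q}$ on the minimal cube $Q\supset F_R\cup G_R$: this is a lower bound by taking $\mathcal S=\{Q\}$, and an upper bound by a geometric summation over a nested chain of dyadic subcubes. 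It therefore suffices to compare $\langle A\mathbf 1_{F_R},\mathbf 1_{G_R}\rangle$ against this single-cube value.

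For the first inequality, take $F_R=\{0\}$ and $G_R=\{x\in\mathbb Z^d:R\le|x|\le 2R\}$. Using $A\delta_0(x)=|x|^{2-d}$ one has $\langle A\mathbf 1_{F_R},\mathbf 1_{G_R}\rangle\simeq R^{2-d}\cdot R^d=R^2$, while the single-cube sparse value is $R^d\cdot R^{-d/p}\cdot 1= R^{d(1-1/p)}$. Boundedness of the ratio as $R\to\infty$ forces $2\le d(1-1/p)$, equivalently $\tfrac1p\le\tfrac{d-2}d$.

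For the second inequality I use a Knapp-type arithmetic example: take $F_R=S_R=\{n\in\mathbb Z^d:|n|^2=R^2\}$, the integer sphere of cardinality $|F_R|\simeq R^{d-2}$, and $G_R=\{ke_1:1\le k\le R\}$, the first $R$ lattice points on the coordinate axis, with $|G_R|=R$. For each $x=ke_1\in G_R$ I will set $\lambda^2=R^2+k(k-2)\in\mathbb N$; the identity $|y-x|^2=|y|^2-2ky_1+k^2$ then shows
\begin{equation*}
A_\lambda\mathbf 1_{F_R}(x)=\lambda^{2-d}\bigl|\{y\in\mathbb Z^d:|y|^2=R^2,\ y_1=1\}\bigr|=\lambda^{2-d}\,r_{d-1}(R^2-1),
\end{equation*}
where $r_{d-1}(N)$ is the number of representations of $N$ as a sum of $d-1$ squares. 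For $d-1\ge 4$ the Hardy--Littlewood estimate (the same arithmetic input driving the positive sparse bounds of the paper) yields $r_{d-1}(R^2-1)\gtrsim R^{d-3}$ for a suitable sequence of $R$, and since $\lambda\simeq R$ this gives $A\mathbf 1_{F_R}(x)\gtrsim R^{-1}$ and hence $\langle A\mathbf 1_{F_R},\mathbf 1_{G_R}\rangle\gtrsim R\cdot R^{-1}=1$. The single-cube sparse value is
\begin{equation*}
R^d\cdot(R^{d-2}/R^d)^{1/p}\cdot(R/R^d)^{1/q}=R^{d-2/p-(d-1)/q},
\end{equation*}
so the ratio diverges precisely when $d-2/p-(d-1)/q<0$, yielding the required necessary inequality $2/p+(d-1)/q\le d$.

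The main obstacle will be the arithmetic input for the second step, namely the lower bound $r_{d-1}(R^2-1)\gtrsim R^{d-3}$. For $d\ge 6$ this is a routine Hardy--Littlewood asymptotic; for $d=5$ one sits at the endpoint $k=4$ and must either restrict to a suitable subsequence of $R$ or invoke a known lower bound for $r_4$ (e.g.\ via the Jacobi formula $r_4(N)=8\sum_{d\mid N,\,4\nmid d}d$, which gives $r_4(N)\gtrsim N$ along $N$ with many small divisors). The multi-scale analysis of $\sup_\mathcal S\Lambda$, needed to justify the single-cube comparison, is straightforward in both examples: in the first, smaller cubes at the origin do not meet $G_R$, and in the second, contributions from dyadic cubes at scales below $R$ are dominated by the scale-$R$ cube.
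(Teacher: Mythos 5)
Your proof is correct and essentially the same as the paper's. Both hinge, for the second inequality, on taking $f$ to be the indicator of a discrete sphere $\mathbb S_\lambda$ and showing $Af\gtrsim\lambda^{-1}$ at $\gtrsim\lambda$ lattice points along a coordinate axis by counting representations as a sum of $d-1$ squares; the only cosmetic differences are that you derive the first inequality directly from the sparse form rather than via the implied $\ell^p\to\ell^{p,\infty}$ bound, and you slice the sphere at $y_1=1$ (so you need $r_{d-1}(R^2-1)\gtrsim R^{d-3}$, which forces the 2-adic subsequence fuss at $d=5$) whereas the paper slices at $y_1=0$ and insists $\lambda^2$ is odd, which handles $r_{d-1}(\lambda^2)$ at the endpoint $d=5$ with no extra work.
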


\begin{proof}
If a  $ (p,q)$ sparse bound held we would conclude that $ A \;:\; \ell ^{p} \to \ell ^{p, \infty } $. 
Taking $ f = \mathbf 1_{0}$, note that $ A f (x) \simeq (1+\lvert  x\rvert)^{2-d} $. The latter function is $ \ell ^{ \frac{d} {d-2}, \infty }$. 
Hence, $ p_1 \geq \frac{d} {d-2}$ is necessary.  

For the second inequality in \eqref{e:nec},  set $ \mathbb S _{\lambda } = \{n\in \mathbb Z ^{d} \;:\;  \lvert  n\rvert  = \lambda  \}$.  
We recall that for $ d \geq 4$, and any odd choice of $ \lambda ^2 \in \mathbb N $, we have $ \lvert  \mathbb S _{\lambda }\rvert \simeq \lambda ^{d-2} $. 
For an odd choice of $ \lambda ^2 \in \mathbb N $, let  $ f =  \mathbf 1_{ \mathbb S _{\lambda }}$, 
and consider $  G = \{ A f > c/ \lambda \}$.  This is the set of $ x\in \mathbb Z ^{d}$ for which 
the two spheres $ \mathbb S _{\lambda } $ and $ x+ \mathbb S _{\mu }$, for some choice of $ \mu \simeq \lambda $,  have about the expected size.  
Here, necessarily   $ G\subset E$, 
a cube centred at the origin of side length about $ \lambda $.  

We claim that $ \lvert  G\rvert \gtrsim \lambda  $.  And observe that $ A f (x) \gtrsim \lambda ^{-1} $ for $ x \in G$. 
Moreover,  from the assumed 
 $ (p,q)$ sparse bound,  
\begin{align}  \label{e:NEC}
\lambda ^{-1} \langle \mathbf 1_{G} \rangle_E 
 &\lesssim \lambda ^{-d}  \langle A f, g   \rangle 
 \\&\lesssim \langle  f\rangle_E ^{\frac1{p}} \langle \mathbf 1_{G} \rangle _E ^{\frac1{q}} \lesssim \lambda ^{- \frac{2}p} \langle \mathbf 1_{G} \rangle _E ^{\frac1{q}}. 
\end{align}
For this pair of functions, it is easy to see that the maximal sparse form is the expression on the right.  
Our lower bound on the size of $ G$ proves the proposition.  

Note that since dimension $ d \geq 5$, and $ \lambda ^2 $ is odd, so there are about $ \lambda ^{d-3}$ choices of vectors $ y = (0, y_2 ,\dotsc, y_d) \in \mathbb S _{\lambda }$. 
(We insist on $ \lambda ^2 $ being odd to capture the case of $ d=5$ here.) 
Then,  if $ \lvert  x_1\rvert \leq \lambda /2  $,  note that   
\begin{equation*}
\lVert (0, y_2 ,\dotsc, y_d) - (x_1 , 0 ,\dotsc, 0)\rVert= \sqrt {\lambda ^2 + x_1 ^2 }  = \lambda'. 
\end{equation*}
From that, it follows that 
\begin{equation*}
A _{\lambda'  } f (x_1 ,0 ,\dotsc, 0) \simeq \lambda ^{-1}.  
\end{equation*}
This shows that $ \lvert  G\rvert \gtrsim \lambda  $. 
\end{proof}

Observe that the $ (\frac d{d-2} , \frac d{d-2})$ sparse bound and \eqref{e:NEC} 
imply that 
$
\lvert  G\rvert \lesssim \lambda ^{\frac{d+4}2}  
$.
Our lower bound is certainly not sharp.  What is the correct size of $ G$?

\bibliographystyle{alpha,amsplain}	

\begin{bibdiv}
\begin{biblist}

\bib{MR1209299}{article}{
      author={Bourgain, J.},
       title={Fourier transform restriction phenomena for certain lattice
  subsets and applications to nonlinear evolution equations. {I}.
  {S}chr\"odinger equations},
        date={1993},
        ISSN={1016-443X},
     journal={Geom. Funct. Anal.},
      volume={3},
      number={2},
       pages={107\ndash 156},
         url={https://doi-org.prx.library.gatech.edu/10.1007/BF01896020},
      review={\MR{1209299}},
}

\bib{MR812567}{article}{
      author={Bourgain, Jean},
       title={Estimations de certaines fonctions maximales},
        date={1985},
        ISSN={0249-6291},
     journal={C. R. Acad. Sci. Paris S\'er. I Math.},
      volume={301},
      number={10},
       pages={499\ndash 502},
      review={\MR{812567}},
}

\bib{180803822}{article}{
      author={{Cook}, B.},
       title={{A note on discrete spherical averages over sparse sequences}},
        date={2018},
     journal={ArXiv e-prints},
      eprint={1808.03822},
}

\bib{160904313}{article}{
      author={{Hughes}, K.},
       title={{The discrete spherical averages over a family of sparse
  sequences}},
        date={2016-09},
     journal={ArXiv e-prints},
      eprint={1609.04313},
}

\bib{180409260H}{article}{
      author={{Hughes}, K.},
       title={{$\ell^p$-improving for discrete spherical averages}},
        date={2018-04},
     journal={ArXiv e-prints},
      eprint={1804.09260},
}

\bib{I}{article}{
      author={Ionescu, Alexandru~D.},
       title={An endpoint estimate for the discrete spherical maximal
  function},
        date={2004},
        ISSN={0002-9939},
     journal={Proc. Amer. Math. Soc.},
      volume={132},
      number={5},
       pages={1411\ndash 1417},
  url={https://doi-org.prx.library.gatech.edu/10.1090/S0002-9939-03-07207-1},
      review={\MR{2053347}},
}

\bib{180509925}{article}{
      author={{Kesler}, R.},
       title={{$\ell^p(\mathbb{Z}^d)$-Improving Properties and Sparse Bounds
  for Discrete Spherical Maximal Averages}},
        date={2018-05},
     journal={ArXiv e-prints},
      eprint={1805.09925},
}

\bib{180906468}{article}{
      author={{Kesler}, R.},
       title={{$\ell^p(\mathbb{Z}^d)$-Improving Properties and Sparse Bounds
  for Discrete Spherical Maximal Means, Revisited}},
        date={2018-09},
     journal={ArXiv e-prints},
      eprint={1809.06468},
}

\bib{180409845}{article}{
      author={Kesler, R.},
      author={{Lacey}, M.~T.},
       title={{$ \ell ^{p}$-improving inequalities for Discrete Spherical
  Averages}},
        date={2018-04},
     journal={ArXiv e-prints},
      eprint={1804.09845},
}

\bib{181012344}{article}{
      author={{Kesler}, R.},
      author={{Lacey}, M.~T.},
      author={{Mena}, D.},
       title={{Lacunary Discrete Spherical Maximal Functions}},
        date={2018-10},
     journal={ArXiv e-prints},
      eprint={1810.12344},
}

\bib{170208594L}{article}{
      author={{Lacey}, M.~T.},
       title={{Sparse Bounds for Spherical Maximal Functions}},
        date={2017-02},
     journal={ArXiv e-prints},
      eprint={1702.08594},
}

\bib{MR1949873}{article}{
      author={Lee, Sanghyuk},
       title={Endpoint estimates for the circular maximal function},
        date={2003},
        ISSN={0002-9939},
     journal={Proc. Amer. Math. Soc.},
      volume={131},
      number={5},
       pages={1433\ndash 1442},
  url={http://dx.doi.org.prx.library.gatech.edu/10.1090/S0002-9939-02-06781-3},
      review={\MR{1949873}},
}

\bib{MSW}{article}{
      author={Magyar, A.},
      author={Stein, E.~M.},
      author={Wainger, S.},
       title={Discrete analogues in harmonic analysis: spherical averages},
        date={2002},
        ISSN={0003-486X},
     journal={Ann. of Math. (2)},
      volume={155},
      number={1},
       pages={189\ndash 208},
         url={https://doi-org.prx.library.gatech.edu/10.2307/3062154},
      review={\MR{1888798}},
}

\bib{MR1617657}{article}{
      author={Magyar, Akos},
       title={{$L^p$}-bounds for spherical maximal operators on {$\bold Z^n$}},
        date={1997},
        ISSN={0213-2230},
     journal={Rev. Mat. Iberoamericana},
      volume={13},
      number={2},
       pages={307\ndash 317},
         url={https://doi-org.prx.library.gatech.edu/10.4171/RMI/222},
      review={\MR{1617657}},
}

\bib{MR1388870}{article}{
      author={Schlag, W.},
       title={A generalization of {B}ourgain's circular maximal theorem},
        date={1997},
        ISSN={0894-0347},
     journal={J. Amer. Math. Soc.},
      volume={10},
      number={1},
       pages={103\ndash 122},
  url={http://dx.doi.org.prx.library.gatech.edu/10.1090/S0894-0347-97-00217-8},
      review={\MR{1388870}},
}

\bib{MR1432805}{article}{
      author={Schlag, Wilhelm},
      author={Sogge, Christopher~D.},
       title={Local smoothing estimates related to the circular maximal
  theorem},
        date={1997},
        ISSN={1073-2780},
     journal={Math. Res. Lett.},
      volume={4},
      number={1},
       pages={1\ndash 15},
  url={http://dx.doi.org.prx.library.gatech.edu/10.4310/MRL.1997.v4.n1.a1},
      review={\MR{1432805}},
}

\end{biblist}
\end{bibdiv}

\end{document}